\documentclass[reqno]{amsart}
\usepackage{array}   
\usepackage{amsmath,amssymb,amsthm,amsfonts }
\usepackage{bm}
\usepackage{cite}
\usepackage{color,xcolor}
\usepackage{enumerate}
\usepackage{graphicx}
\usepackage{geometry}  
\usepackage{hyperref}
\usepackage{stmaryrd}
\SetSymbolFont{stmry}{bold}{U}{stmry}{m}{n}
\usepackage{tabu}
\usepackage{tabularx}
\usepackage{tikz} 
\usepackage{url} 
\usepackage[normalem]{ulem}
\newtheorem{theorem}{Theorem}[section]
\newtheorem{lemma}{Lemma}[section]

\newtheorem{remark}{Remark}[section]
\newtheorem*{example}{Example}

\numberwithin{equation}{section}

\newcommand{\energy}[1]{\interleave #1 \interleave_{\gamma}}
\newcommand{\norme}[1]{\interleave #1\interleave}
\newcommand{\norm}[1]{\left\Vert#1\right\Vert}
\newcommand{\abs}[1]{\left\vert#1\right\vert}
\newcommand{\bra}{\langle}
\newcommand{\ket}{\rangle}
 
\newcommand{\jv}[1]{\llbracket #1 \rrbracket}

\newcommand{\E}{\boldsymbol{E}}

\newcommand{\f}{\boldsymbol{f}}
\newcommand{\g}{\boldsymbol{g}}
\newcommand{\Mfg}{\boldsymbol{M}(\boldsymbol{f,g})}
\newcommand{\ii}{\boldsymbol{ \mathrm{i} }}

\newcommand{\V}{\boldsymbol{V}}

\newcommand{\Vh}{\boldsymbol{V}_h}
\newcommand{\J}{\boldsymbol{J}}
\newcommand{\bO}{\mathcal{O}}

\newcommand{\Th}{\mathcal{T}_h}

\newcommand{\Eh}{{\E}_h}
\newcommand{\Eht}{{\E}_{h,T}}
\newcommand{\PEp}{\boldsymbol{P}^{+}_h\E } 

\newcommand{\ps}{{\boldsymbol{\Psi}}}
\newcommand{\pps}{{\boldsymbol{P}^{-}_h\boldsymbol{\Psi}}}

\newcommand{\w}{{\boldsymbol{w}}}
\newcommand{\wh}{{\boldsymbol{w}_h}}

\newcommand{\bL}{{\boldsymbol{L}}}
\newcommand{\bH}{{\boldsymbol{H}}}

\newcommand{\br}{{\boldsymbol{r}}}
\newcommand{\bv}{{\boldsymbol{v}}}
\newcommand{\bu}{{\boldsymbol{u}}}

\newcommand{\vh}{{\boldsymbol{v}_h}}

\newcommand{\bbeta}{{\boldsymbol{\eta} }}
\newcommand{\bxi}{{\boldsymbol{\xi} }}

\newcommand{\etal}{\textit{et al}.}

\newcommand{\eq}[1]{\begin{align}#1\end{align}}
\newcommand{\eqn}[1]{\begin{align*}#1\end{align*}}

\newcommand{\ls}{\lesssim}

\newcommand{\De}{\Delta}
\newcommand{\n}{\boldsymbol{\nu}}
\newcommand{\ga}{\gamma}
\newcommand{\Ga}{\Gamma}
\newcommand{\la}{\lambda}

\newcommand{\na}{\nabla}

\newcommand{\Om}{\Omega}
\newcommand{\pa}{\partial}

\newcommand{\si}{\sigma}

\newcommand{\vp}{\varphi}
\newcommand{\ka}{\kappa}

\DeclareMathOperator{\re}{{Re}}
\DeclareMathOperator{\im}{{Im}}
\DeclareMathOperator{\curl}{\boldsymbol{\mathrm{curl}}}
\DeclareMathOperator{\dive}{\boldsymbol{\mathrm{div}}}
\DeclareMathOperator{\diam}{{diam}}

\newcommand{\qaq}{\quad\mbox{and}\quad}

\newcommand{\fa}{\mathsf{f}}



\begin{document}
	
	\title[EEM   and  CIP-{EEM} for time-harmonic Maxwell Equations]{Preasymptotic error estimates of 
		EEM   and  CIP-EEM   for the  time-harmonic Maxwell {equations with large} wave number}
	\markboth{ S. Lu and H. Wu}{EEM & CIP-EEM for time-harmonic Maxwell Equations}
	
	\author[S. Lu]{Shuaishuai Lu}
	\address{Department of Mathematics, Nanjing University, Jiangsu, 210093, P.R. China. }
	\curraddr{}
	\email{ssl@smail.nju.edu.cn}
	\thanks{This work was partially supported by the NSF of China under grants 12171238, 12261160361, and 11525103.}
	
	\author[H. Wu]{Haijun Wu}
	\address{Department of Mathematics, Nanjing University, Jiangsu, 210093, P.R. China. }
	\curraddr{}
	\email{hjw@nju.edu.cn}
	\thanks{}
	
	\subjclass[2010]{
		65N12, 
		65N15, 
		65N30, 
		78A40  
	}
	
	\date{}
	
	\dedicatory{}
	
	\keywords{Time-harmonic Maxwell equations, Large wave number, EEM, $\bH( \curl )$-conforming interior penalty EEM, Preasymptotic error estimates}
	
	\begin{abstract}
		Preasymptotic error estimates are derived for the linear edge element method (EEM) 
		and the linear  $\bH( \curl )$-conforming interior penalty edge element method  (CIP-EEM) for the time-harmonic
		Maxwell equations with large wave number. 
		It is shown that  under the mesh condition that $\ka^3 h^2$ is
		sufficiently small, the errors of the solutions to both methods  
		are bounded by  $\bO (\ka h + \ka^3 h^2 )$  in the energy norm and $\bO (\ka h^2 + \ka^2 h^2 )$ in 
		the $\bL^2$ norm, where $\ka$ is the wave number and $h$ is the mesh size.  Numerical  tests are provided
		to verify our theoretical results and to illustrate the potential of CIP-{EEM} in significantly reducing the
		pollution effect.
		
	\end{abstract}
	
	\maketitle
	
	\section{Introduction}\label{sc1}
	In this paper, we consider the following time-harmonic Maxwell equations for the electric field $\E$ with 
	the standard impedance boundary condition: 
	\begin{alignat}{2} 
		\curl \curl  \E  - \ka^2 \E &= \f \qquad  &&{\rm in }\  \Omega,  \label{eq:eq}  \\
		\curl \E \times  \n -\ii\ka\la\E_T &=\g \qquad  &&{\rm on }\ \Ga := \pa  \Omega,  \label{eq:bc}
	\end{alignat}
	where $\Omega \subset \mathbb{R}^{3}$ is a bounded  domain with a $C^2$ boundary. Here, $\ii=\sqrt{-1}$ 
	denotes the imaginary unit while $\n$ denotes the unit outward normal to $\Ga$ 
	and $\E_{T}:=(\n \times \E) \times \n$ is the tangential component 
	of   $\E$ on $\Ga$. Additionally, 
	$\ka>0$ is the wave number, and $\la>0$ is known as the impedance constant. 
	The right-hand side $\f$ is related to a given current density (cf. \cite{monk2003}). 
	It is assumed that {$\dive\f=0$ in $\Om$ and $\g_{T}=\g$ on $\Ga$ (i.e. $\g\cdot \n=0$).} 
	The time-harmonic Maxwell equations govern the propagation of electromagnetic waves 
	at a specified frequency, serving as fundamental tools
	for comprehending and forecasting electromagnetic field behaviors across diverse
	applications such as telecommunications, radar, optics, and electromagnetic sensing.
	Accurate numerical approximation of these equations is imperative for predicting
	wave propagation phenomena in practical scenarios.
	
	Since the seminal contributions of N\'{e}d\'{e}lec \cite{Nedelec1980,Nedelec1986}, the  
	edge element method (EEM) has garnered significant popularity and become a central tool  for addressing electromagnetic field problems. 
	Subsequently, a substantial body of literature has emerged studying EEM for solving the  Maxwell equations, we refer the readers to \cite{monk2003,Hiptmair2002femcem,jin2015theorycem} and the references therein. For time-harmonic Maxwell problems with impedance boundary conditions, 
	Monk \cite[Chapter~7]{monk2003} and Gatica  {and Meddahi} \cite{gatica2012finite} have investigated the convergence and error estimation of EEM under
	the condition that the mesh size is sufficiently small. Both of these results  are $\ka$-implicit, meaning they do not discuss the
	{influence} of $\ka$ on the error, and thus are applicable primarily to problems with  {small} wave numbers.
	
	However, in scenarios involving large wave numbers,  {a large number of studies of the finite element methods (FEMs) for Helmholtz equations \cite{Babuska1997pollution,ihlenburg98,melenk2010dtn,melenk2011,Wu2014Pre,DuWu2015,ZhuWu2013II,ZhuWuHDGHelmholtz} indicate that the EEM of fixed order may also suffer from the well-known ``pollution effect", i.e., compared to the best approximation from the discrete space, the approximation ability of the solution to the EEM (with fixed order) gets worse and worse as the wave number $\ka$ increases. 
		Clearly, estimating the pollution error is significant both in theory and practice, and it has always been interesting to develop numerical methods with less pollution error. Unfortunately, rigorous pollution-error analyses for the EEM for the time-harmonic Maxwell equations are still unavailable in the literature. We recall that the term ``asymptotic error estimate" refers to the error estimate without pollution error and the term ``preasymptotic error estimate" refers to the estimate with non-negligible pollution effect.}

	The latest asymptotic error analysis for the time-harmonic Maxwell equations was conducted by Melenk and Sauter in
	\cite{Melenk_2020,melenk2023wavenumber}, where the {first type N\'{e}d\'{e}lec} $hp$-EEM for solving the 
	time-harmonic Maxwell equations with transparent boundary condition and impedance  boundary condition
	has been studied. Using the so-called  ``regularity decomposition" technique, they show 
	that the $hp$-EEM is pollution-free under the following
	\emph{scale resolution conditions}:
	\begin{equation}
		\frac{  \ka  h}{p}\leq c_{1}\qquad\mbox{ and }\qquad p\geq
		c_{2}\ln \ka , \label{eq:scale-resolution} 
	\end{equation}
	where $h$ is the mesh size, $c_{2}$ is an arbitrary positive  number and $c_{1}>0$ is sufficiently small. The resolution 
	condition $\ka h/p\leq c_{1}$ is natural for resolving the oscillatory
	behavior of the solution, and the side constraint $p\geq c_{2}\ln 
	\ka $ suppresses the
	pollution effect. 
	The above result highlights  the advantages of high-order EEM, representing a breakthrough in the
	analysis of the time-harmonic Maxwell  equations  in the large wave number regime. {But for fixed order, say second order EEM of the first type, the above two works require that $k^8h$ is sufficiently small  (see \cite[Remark 4.19]{Melenk_2020}) for the case of transparent boundary condition and $k^2h$ is sufficiently small for the case of impedance boundary condition, respectively, which are too strict in $h$.
		For the special case $\g = \bm{0} $ in problem \eqref{eq:eq}--\eqref{eq:bc}, Nicaise and
		{Tomezyk \cite{nicaise2020convergence,NicaiseTomezyk2019}}, using $h$- and $hp$-FEM  with Lagrange elements, performed
		asymptotic error analyses for a regularized Maxwell system(cf. \cite[\S~4.5d]{costabel2010corner}).

		Recently, Lu~ \etal \cite{pplu2019} proposed  {an}  $\bH( \curl  )$-conforming  interior penalty  edge element method 
		(CIP-EEM), which uses the same approximation space as EEM but modifies the sesquilinear form by adding a least squares 
		term penalizing the jump of the tangential {components of the} $ \curl $ of the discrete solution at mesh interfaces.
		Using the so-called  ``stability-error iterative improvement" technique \cite{fw2011},  they  proved that the CIP-EEM with pure imaginary penalty parameter $\ga=-\ii\ga_{\ii}$ is absolute stable (that is, stable for any $\ka$ and $h$) and obtained the 
			following error estimate  under the conditions $\ga_{\ii} \simeq 1$ and $\ka^3 h^2 =\bO(1) $: 
			\begin{equation}
				\norme{\E-\Eh} =\bO  (\ka h+ \ka^3 h^2),
				\label{eq:PreenergyResult0}
			\end{equation}
			where $\norme{\cdot}=\big(\norm{\curl \cdot}^2+\ka^2\|\cdot\|^2 + \ka\la\|\cdot_T\|^2 _{ \Ga }\big)^{\frac{1}{2}}$ is the energy norm. This is a closely related result. However, this result does not encompass the  classical  EEM   since $\ga$ cannot be zero there.   For wave-number-explicit error analyses of various discontinuous Galerkin (DG) methods, we refer to \cite{feng2014absolutely,feng_lu_xu_2016,hiptmair2012}.

		The purpose of this paper is twofold. First, we focus on the preasymptotic error analysis of the EEM and provide the first such results in the literature for the EEM using the linear N\'{e}d\'{e}lec element of the second type. To be precise, we have proved the following error estimate for the solution $\Eh$ to the linear EEM,  under the mesh condition that $\ka^3 h^2 $ is sufficiently small: 
		\begin{equation}\label{eq:intro_energy_error}
			\norme{\E-\Eh} =  \bO (\ka  h + \ka^3 h^2 ).
		\end{equation} 
		Clearly,  the first term $\bO(\ka h)$ on the right-hand side of \eqref{eq:intro_energy_error} is of the same order as the error of the best approximation, which can be reduced by putting enough points per wavelength. However, the second term $\bO(\ka^3h^2)$ will get out of control as the wave number $\ka$ increases, no matter how many points are put per wavelength, as long as the number of them is fixed.  Therefore, the second term is the pollution error which dominates the error bound when  $\ka^2 h$ is large. Moreover, the following $\bL^2$ error estimate has also been proved under the same mesh condition:
		\eq{\label{eq:intro_L2_error}\ka\|\E-\Eh\|  =\bO \big( (\ka h)^2 + \ka^3 h^2\big). }
		We would like to mention that, in order to prove the error estimates \eqref{eq:intro_energy_error}--\eqref{eq:intro_L2_error}, thanks to the recent work \cite{chen24}, we derive a wave-number-explicit $\bH^2$ regularity estimate (see \eqref{eq:H2stab}) for the continuous problem on the domain with only  $C^2$-smooth boundary. A similar estimate was proved in \cite{lu2018regularity} for a domain with $C^3$ boundary.     
		Secondly, we  extend the result in \cite{pplu2019} for the CIP-EEM to a more general case, i.e., the linear CIP-EEM with complex penalty parameters $\ga=\ga_r-\ii\ga_i$. To be precise,  we prove that \eqref{eq:PreenergyResult0} still holds,
		if  $\re\gamma \geq -\alpha_0 $, $\im\gamma\leq 0$,  $|\gamma|\le C$, and $\ka^3 h^2 \leq C_0$, for some positive constants $C_0$ and $\alpha_0$ independent of $\ka$ and $h$. Such an extension  is meaningful since the penalty parameters that can greatly reduce the pollution error are usually close to negative real numbers (see Section~\ref{sc6}), which is not covered by the theory given in \cite{pplu2019}. By the way, the divergence-free constraint on the source term $\f$ in \cite{pplu2019} is dropped in this paper. Compared with the linear EEM, the linear CIP-EEM provides a candidate  with enhanced stability and low pollution effects. We would like to mention that due to their easy of implementation, low-order methods are still attractive in many applications, especially for those low-regularity problems where a higher-order method cannot achieve its full convergence rate.  
		
		The key idea in our preasymptotic analysis for the EEM and CIP-EEM is to first prove stability estimates for the discrete solution $\Eh$, in which a delicate decomposition of $\Eh$ (see \eqref{eq:Ehdecomp}) and a so-called ``modified duality argument" technique play  important roles, and then use the obtained stability estimates to derive the desired error estimates. The ``modified duality argument" technique was first used to derive preasymptotic error estimates for the FEM for Helmholtz equations with large wave numbers \cite{ZhuWu2013II,DuWu2015}. Here we extend it to derive stability estimates for the EEM. We would like to mention that  our proofs are highly non-trivial.  The ``stability-error iterative improvement" technique used in \cite{pplu2019} does not work for the EEM or CIP-EEM with general parameters.  We had also tried to derive preasymptotic error estimates for the EEM by mimicking the usual process (see, e.g., \cite{monk2003}), including using the modified duality argument to derive the error estimates directly, but failed. 
		The preasymptotic error analysis of higher-order EEM and CIP-EEM necessitates additional technical tools  and will be investigated in another work.

		The remainder of this paper is organized as follows. In Section \ref{sc2}, we formulate the   EEM   and the CIP-EEM.
		Section \ref{sc3} introduces some preliminary results about the stability estimates 
		and some error estimates of some $\bH( \curl  )$-elliptic projections. In  Section \ref{sc4}, we first establish
		discrete stability estimates  for the EEM solution.
		Then we prove the preasymptotic error estimates of the EEM for the time-harmonic Maxwell problem.
		In Section \ref{sc5},  we extend the results of  preasymptotic error estimates to the linear CIP-EEM with general penalty parameters. 
		Finally, in Section \ref{sc6},  we present some numerical examples to verify our theoretical findings and the great potential of the CIP-EEM to reduce the pollution errors.

		Throughout this paper, we use notations $A \ls B$ and $A \gtrsim B$ for
		the inequalities $A \leq C B$ and $A \geq C B$,
		where $C$ is a positive number independent of the mesh size $h$ and the wave number $\ka$,
		but the value of which can vary in different occurrences. 
		$A \simeq B$ is a shorthand notation for the statement $A \leq C B$ and $B \leq C A$. 
		{We also denote by $C_{\ka h}$ a generic constant which may depends on $\ka h$ 
			but satisfies $C_{\ka h} \ls 1$ when $\ka h\ls 1$}.
		
		For simplicity, we suppose $\la \simeq 1$, $\ka{\gtrsim1}$. We also assume that the domain $\Om$ 
		is strictly star-shaped with respect to a point ${\bm x}_0\in\Om$, that is,
		$({\bm x}-{\bm x}_0)\cdot \n\gtrsim 1 \quad \forall\bm x\in\Ga$.

		\section{Formulations of   EEM   and  CIP-EEM }\label{sc2}
		
		In order to formulate the two methods, we first introduce some notations. For a domain $D\subset \mathbb{R}^3$ with Lipschitz 
		boundary $\pa D$,  {we shall use the standard Sobolev space $ H^s(D)$, its norm $\|\cdot\|_{s,D}$, seminorm 
			$|\cdot|_{s,D}$, and inner product. We refer to \cite{adams2003sobolev,brenner2008mathematical,monk2003} for their definitions.}   
		If the functions are vector-valued we {shall} indicate these function spaces 
		by boldface symbols{, e.g.,} $\bH^s(D)$. 
		In particular, $(\cdot, \cdot)_{D}$ and $\bra\cdot, \cdot \ket_{\Sigma}$ for
		$\Sigma \subset \pa D$
		denote the $\bL^2$-inner product on complex-valued $\bL^2(D)$ and $\bL^2(\Sigma)$ spaces, respectively.
		{For simplicity, we shall use} the shorthands:
		\begin{align*}
			(\cdot, \cdot):=(\cdot, \cdot)_{\Omega}, \; \bra\cdot, \cdot \ket:=\bra\cdot, \cdot \ket_{\Ga},\; |\bv|_s:= |\bv|_{s,\Om}, \;
			\|\bv\|_s:= \|\bv\|_{s,\Om}, \;	\|\bv\| := \|\bv\|_{0,\Om}, \;\text{and}\;	\|\bv\|_{\Ga}:= \|\bv\|_{0,\Ga}.
		\end{align*}
		We introduce the space
		\eqn{
			& \bH(\curl; \Omega) :=\{\bv\in  \bL^2{(\Omega)} : \curl\bv\in  \bL^2{(\Omega)} \},\\
			& \bH^1(\curl; \Omega) :=\{\bv\in  \bH^1{(\Omega)} : \curl\bv\in  \bH^1{(\Omega)} \}
			\quad\text{with norms}\\
			& \norm{\bm v}_{\bH(\curl)}:=\big(\norm{\curl\bm v}^2+\norm{\bm v}^2\big)^\frac12, \\
			& \norm{\bm v}_{\bH^1(\curl)}:=\big(\norm{\curl\bm v}_1^2+\norm{\bm v}_1^2\big)^\frac12   
			\quad\text{and seminorm} \\
			& |\bm v|_{\bH^1(\curl)} := \big(|\curl\bm v|_1^2+|\bm v|_1^2\big)^\frac12.  
		}
		Let
		\eqn{
			H_0^1(\Om) &:= \big\{v \in H^1(\Om) :  v|_{\Ga} =0\big\},\\
			\bH_{0}(\curl; \Omega )& :=\big\{\bv \in \bH(\curl; \Omega ) : \bv \times \n|_\Ga=0 \big\}, }
		where $v|_{\Ga}$ is understood as the trace from $H^1(\Om)$ to $H^\frac12(\Ga)$ and $\bv \times \n|_\Ga$  
		is the tangential trace from $\bH(\curl; \Omega )$ to $\bH_{\dive}^{-1/2}(\Ga)$. Similarly, we will understand 
		$\bv_T$ as the tangential trace from $\bH(\curl; \Omega )$ to $\bH_{\curl}^{-1/2}(\Ga)$  (see, e.g., \cite{monk2003,Melenk_2020}).

		Next we define the  ``energy"  space
		\eqn{
			\V&:= \big\{ \bv\in \bH(\curl; \Om) :  {\bv_T \in \bL^2(\Ga) \big\}}  \quad\text{with norm}\\
			{\norme{\bv}}&:=\big(\norm{\curl\bm v}^2+\ka^2\|\bv\|^2 +
			\ka\la\|\bv_T\|^2 _{ \Ga }\big)^{\frac{1}{2}},}
		and introduce the following sesquilinear form on $\V\times \V$: 
		\begin{equation}
			a\left({\bm u} , {\bm v} \right):=\left(\curl {\bm u} , \curl {\bm v}\right) 
			-\ka^{2}\left({\bm u}, {\bm v} \right)-\ii \ka \lambda\left\langle{{\bm u}_T,{\bm v}_T}\right\rangle.
			\label{eq:a}
		\end{equation}
		Then the variational formulation of \eqref{eq:eq}--\eqref{eq:bc} reads as:
		Find $\boldsymbol{E}  \in \boldsymbol{V} $ such that
		\begin{equation}
			a \left(\E  , {\bm v} \right)=\left(\boldsymbol{f}, 
			{\bm v} \right)+\left\langle \g,{{\bm v}_T}\right\rangle 
			\quad \forall {\bm v} \in {\boldsymbol{V}}.
			\label{eq:dvp}
		\end{equation}
		
		To {discretize \eqref{eq:dvp}}, we follow \cite{melenk2023wavenumber} to introduce a regular and quasi-uniform
		triangulation $\mathcal{T}_{h}$ of $\Om$ which satisfies:
		\begin{enumerate}
			\item {The (closed)  elements} $K\in \Th$ cover $\Om$, i.e.,  
			{$\bar{\Om}=\cup_{K\in\Th}K$.}  
			\item Associated with each element $K$ is the element map,  a $C^1$-diffeomorphism 
			$F_K:\hat{K}\rightarrow K$. The set $\hat{K}$ is the reference tetrahedron. Denoting $h_K:=\diam(K)$, 
			there holds, with some shape-regularity constant $\si$,
			\eqn{h_K^{-1}\norm{F_K'}_{L^\infty(\hat K)}+h_K\norm{(F_K')^{-1}}_{L^\infty(\hat K)}\le \si,} 
			where $F_K'$ is the Jacobian matrix of $F_K$.
			\item The intersection of two elements is only empty, a vertex, an edge, a face, or they coincide
			(here, vertices, edges, and faces are the images of the corresponding
			entities on the reference element $\hat{K}$). The parameterization of common edges
			or faces is compatible. That is, if two elements $K$, $K'$ share an edge (i.e.,  
			$F_K(e) = F_{K'}(e')$ for edges $ e , e'$ of $\hat{K}$ ) or a face 
			(i.e., $F_K ( {\fa} ) = F_{K'} ( \fa' )$ for faces $\fa , \fa'$ of $\hat{K}$), 
			then $F^{-1}_K \circ  F_{K'}: \fa' \rightarrow \fa $ is an affine isomorphism.
		\end{enumerate}
		The set of all faces of $\Th$ is denoted by $\mathcal{F}_h$ , and the set of all interior faces 
		by $\mathcal{F}_h^I$. For any $\fa\in\mathcal{F}_h$, {let $h_\fa:=\diam(\fa)$.}  Denote by $h:=\max_{K\in\Th} h_K$. 
		
		Let $\V_{h}$ 
		be the first order N\'{e}d\'{e}lec edge element space of the second type (see \cite[(3.76)]{monk2003}):
		\begin{equation}
			\V_{h}:=\Big\{\bv_{h} \in \V :\; ( F_K'  )^T  ( \left.\bv_{h}\right|_{K} )\circ F_K
			\in\big(\mathcal{P}_{1}(\hat{K})\big)^{3} \quad \forall K \in \mathcal{T}_{h}\Big\},	
			\label{eq:FEMspace}
		\end{equation}
		where $\mathcal{P}_{1}(\hat{K})$ denotes the set of linear polynomials on $\hat{K}$.
		The EEM for the Maxwell problem \eqref{eq:eq}--\eqref{eq:bc} {reads as:} 
		Find $\boldsymbol{E}_{h} \in \boldsymbol{V}_{h}$ such that
		\begin{equation}
			a\left(\Eh , {\bm v}_h\right)=\left(\boldsymbol{f}, 
			{\bm v}_h\right)+\left\langle \g,{\bm v}_{h,T}\right\rangle 
			\quad \forall {\bm v}_h \in \boldsymbol{V}_{h}.
			\label{eq:EEM}
		\end{equation}
		To formulate the CIP-EEM, we define the jump of the
		tangential {components of $\curl \bu$} on an interior face 
		$\fa =\pa K^{-}\cap\pa K^{+}$ :
		\begin{equation}
			{\jv{\curl \bu }} :=  \curl\bu|_{K^{-}} \times\n_{K^{-}} +\curl\bu|_{K^{+}} \times\n_{K^{+}} ,
			\label{def:jump}
		\end{equation}
		where $\n_{K^{-}}$ and $\n_{K^{+}}$ denote the unit outward normal 
		to  $\pa K^{-}$ and $\pa K^{+}$, respectively. We introduce the following 
		sesquilinear form consisting of penalty terms on interior faces:
		\begin{equation}
			\J({\bm u},{\bm v}):= \sum_{\fa  \in\mathcal{F}_h^I}\ga_{\fa }	h_\fa 
			\bra \jv{\curl{\bm u}},\jv{\curl{\bm v}} \ket_{\fa },
			\label{eq:penalty}
		\end{equation}
		where $\ga_{\fa}$ are penalty parameters that are allowed to be complex-valued. Then the CIP-EEM is
		constructed by simply adding $\J(\Eh,{\bm v}_h)$ to the left-hand side of EEM \eqref{eq:EEM}, which is to  
		find $\boldsymbol{E}_{h} \in \boldsymbol{V}_{h}$ such that
		\begin{equation}
			a_\ga\left(\Eh , {\bm v}_h\right)=\left(\boldsymbol{f}, 
			{\bm v}_h\right)+\left\langle \g,{\bm v}_{h,T}\right\rangle
			\quad \forall {\bm v}_h \in \boldsymbol{V}_{h}	,
			\label{eq:disvp}
		\end{equation}
		where
		\begin{equation}
			a_\ga\left({\bm u}, {\bm v}\right):=a\left({\bm u}, {\bm v}\right)+\J\left({\bm u}, {\bm v}\right).	\label{eq:ah}
		\end{equation}
		
		\begin{remark}\label{rem:ppvalue}
			
			{{\rm (a)}  If $\gamma_\fa\equiv 0$, then the CIP-EEM becomes the classical   EEM  \eqref{eq:EEM}.}
			
			{\rm (b)} {The CIP-EEM is a generalization to Maxwell equations of the continuous 
				interior penalty finite element method (CIP-FEM) for elliptic and parabolic 
				problems \cite{douglas2008interior}, in particular, the
				convection-dominated problems  \cite{burman2005} and Helmholtz equations
				\cite{ZhuWu2013II,Wu2014Pre,DuWu2015}.}
			
			{\rm (c)} {The CIP-EEM was first introduced and studied in \cite{pplu2019} but with 
				pure imaginary penalty parameters. This paper concerns the CIP-EEM with general
				complex penalty parameters, especially real penalty parameters, which can help to
				reduce the pollution errors.}

			{\rm (d)} Compared to the discontinuous Galerkin
			methods \cite{feng2014absolutely,houston2005interior} and  hybridizable 
			discontinuous Galerkin method \cite{lu2017HDGMaxwell}, the  CIP-EEM  involves
			fewer degrees of freedom, and thus requires less  computational cost.
			
			{\rm (e)} {It is clear that $\J(\E,{\bm v}_h)=0$ for the exact solution $\E$, therefore, 
				the CIP-EEM is consistent with the Maxwell problem \eqref{eq:dvp}, and hence there 
				holds the following Galerkin orthogonality:}
			\begin{equation}
				a_\ga\left(\boldsymbol{E}-\Eh , {\bm v}_h\right)=0 \quad
				\forall {\bm v}_h \in \boldsymbol{V}_{h}.	
				\label{eq:orthogonality}
			\end{equation}

		\end{remark}

		\section{Preliminary}\label{sc3}
		In this section, we list some preliminary results which will be used in the 
		later sections.
		
		First of all, we list two lemmas below to give  stability and regularity  
		estimates for the Maxwell problem \eqref{eq:eq}--\eqref{eq:bc} that are explicit with respect to the wave number.

		\begin{lemma}\label{thm:stability}
			Assume that $\Omega \subset \mathbb{R}^{3}$ is a bounded $C^{2}$-domain and strictly star-shaped
			with respect to  a point ${\bm x}_{0} \in \Om$ and that $\dive\f=0$ in $\Om$ and $\g\cdot\n=0$ on $\Ga$.	
			Let $\E$ be the solution to the problem \eqref{eq:eq}--\eqref{eq:bc}. Then we have
			\eq{
				\|\curl  \E \| + \ka \|\E \|  +\ka \|\E_T \|_{\Ga} 
				&\lesssim   \|\f\| +\|\g\|_{\Ga},    \label{eq:stability0}\\
				\|\curl  \E \|_{1} +\ka\norm{\E}_1&\lesssim \ka\left(\|\f\| +\|\g\|_{\Ga}\right)+\|\g\|_{\frac{1}{2}, \Ga}, 
				\label{eq:stability1}\\
				\|\E\|_{2} &\lesssim \ka \Mfg, \label{eq:H2stab}
			}	 
			where
			\eqn{
				\Mfg:=\|\f\| +\ka^{-2}\|\f\cdot\n\|_{\frac{1}{2},\Ga} +\|\g\|_{0, \Ga}+\ka^{-1}\|\g\|_{ \frac{1}{2},  \Ga }+\ka^{-2}\|\dive_\Ga\g\|_{ \frac{1}{2},  \Ga },}
			and $\dive_\Ga$ is the the surface divergence operator (see, e.g., \cite{monk2003,melenk2023wavenumber}).	
		\end{lemma}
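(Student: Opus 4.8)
The plan is to prove the three estimates in order, exploiting a reduction to a vector Helmholtz system. First observe that the hypothesis $\dive\f=0$ forces $\E$ to be divergence free: taking the divergence of \eqref{eq:eq} and using $\dive\curl\equiv0$ gives $-\ka^2\dive\E=\dive\f=0$, hence $\dive\E=0$ in $\Om$. Combined with the vector identity $\curl\curl\E=-\Delta\E+\na\dive\E$, this shows that $\E$ solves $-\Delta\E-\ka^2\E=\f$, a componentwise Helmholtz system coupled only through the impedance condition \eqref{eq:bc}. This observation underlies all three estimates.

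For the basic a priori bound \eqref{eq:stability0} I would use a Rellich--Morawetz multiplier argument, the standard route to wave-number-explicit stability. Testing \eqref{eq:dvp} with $\bv=\E$ and taking imaginary parts isolates the boundary term, yielding $\ka\la\|\E_T\|_\Ga^2\le|(\f,\E)|+|\bra\g,\E_T\ket|$. Next, with $\bm m:=\bx-\bx_0$ I would apply the Rellich identity obtained by pairing the reduced equation with $(\bm m\cdot\na)\E$ (componentwise), the volume part of which reproduces $\|\curl\E\|^2$ and $\ka^2\|\E\|^2$ with the right signs in dimension three. The boundary terms are controlled using the strict star-shapedness $(\bx-\bx_0)\cdot\n\gtrsim1$, which fixes the sign of the dominant contribution, while the impedance condition \eqref{eq:bc} rewrites $\curl\E\times\n$ in terms of $\g$ and $\E_T$, so that the remaining boundary terms are absorbed by the tangential control just obtained. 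Combining the real and imaginary parts with Cauchy--Schwarz and Young's inequality then gives \eqref{eq:stability0} (with a density argument to justify the multiplier computation for $\E\in\V$).

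The $\bH^1$ estimate \eqref{eq:stability1} follows by a regularity bootstrap for the $\curl$--$\dive$ system. From \eqref{eq:stability0} the right-hand side of $\curl(\curl\E)=\f+\ka^2\E$ lies in $\bL^2$ with $\|\f+\ka^2\E\|\ls\|\f\|+\ka(\|\f\|+\|\g\|_\Ga)$, while $\dive(\curl\E)=0$ and the tangential trace $\curl\E\times\n=\g+\ii\ka\la\E_T$ is controlled in the appropriate boundary norm. Hence the classical $\bH^1$-regularity theory for fields in $\bH(\curl)\cap\bH(\dive)$ on a $C^2$ domain applies both to $\curl\E$ and, using $\dive\E=0$, to $\E$ itself; tracking the wave-number weights through these estimates produces the stated $\ka$-explicit bound.

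The genuinely hard part is the $\bH^2$ estimate \eqref{eq:H2stab}, precisely because the boundary is only $C^2$ rather than the $C^3$ assumed in \cite{lu2018regularity}. Here I would invoke the recent div--curl regularity-shift result of \cite{chen24}: on a $C^2$ domain a field with $\dive$ in $H^1$, $\curl$ in $\bH^1$, and admissible boundary data belongs to $\bH^2$, with an explicit constant. Applying this to $\E$, for which $\dive\E=0$ and $\curl\E\in\bH^1$ by \eqref{eq:stability1}, reduces matters to estimating the boundary contributions. The delicate step is to express the $\bH^2$ boundary data through $\f\cdot\n$, $\g$, and $\dive_\Ga\g$ and to attach the correct negative powers of $\ka$ to each, which is exactly what produces the precise composition of $\Mfg$ and the overall factor $\ka$ on the right-hand side. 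I expect this bookkeeping of boundary regularity together with the wave-number weights, rather than any single inequality, to be the main obstacle.
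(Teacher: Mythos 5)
Your treatment of \eqref{eq:stability0} and \eqref{eq:stability1} matches the paper in substance: the paper simply cites Hiptmair--Moiola--Perugia for both, and the Rellich--Morawetz multiplier argument with $\bm m=\bx-{\bm x}_0$ that you sketch is exactly the mechanism behind that citation. The problem is \eqref{eq:H2stab}, the only estimate the paper actually proves, where your proposal stops precisely where the real work begins. The paper's argument is to rewrite the system as $\curl\curl\E-\E=F$ with boundary datum $G$, where $F:=\f+(\ka^2-1)\E$ and $G:=\g+\ii(\ka-1)\la\E_T$, and to apply the $C^2$-domain regularity theorem of \cite{chen24}, whose right-hand side contains the \emph{combined} term $\|F\cdot\n-\dive_\Ga G\|_{\frac12,\Ga}$. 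The decisive step is then the identity $\dive_\Ga(\curl\E\times\n)=\n\cdot\curl\curl\E=(\ka^2\E+\f)\cdot\n$, which together with \eqref{eq:bc} gives $\dive_\Ga(\ii\la\E_T)=\ka\,\E\cdot\n+\ka^{-1}(\f\cdot\n-\dive_\Ga\g)$ and hence the exact cancellation $F\cdot\n-\dive_\Ga G=(\ka-1)\E\cdot\n+\ka^{-1}(\f\cdot\n-\dive_\Ga\g)$.

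That cancellation is not bookkeeping; it is the point. If you bound $\|F\cdot\n\|_{\frac12,\Ga}$ and $\|\dive_\Ga G\|_{\frac12,\Ga}$ separately --- which is what your plan amounts to, since you supply no mechanism for combining them --- the term $\ka^2\|\E\cdot\n\|_{\frac12,\Ga}\ls\ka^2\|\E\|_1$ appears, and by \eqref{eq:stability1} this only yields $\|\E\|_2\ls\ka^2\Mfg$, one power of $\ka$ worse than claimed. The cancellation replaces it by $(\ka-1)\|\E\cdot\n\|_{\frac12,\Ga}\ls\ka\|\E\|_1$, which is exactly what produces the factor $\ka$ in \eqref{eq:H2stab} and the weights $\ka^{-2}$ on $\|\f\cdot\n\|_{\frac12,\Ga}$ and $\|\dive_\Ga\g\|_{\frac12,\Ga}$ in $\Mfg$. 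Your framing of \cite{chen24} as a generic div--curl regularity shift, and the reduction to a componentwise vector Helmholtz system, also obscure the fact that the second-order boundary data couple $\E\cdot\n$, $\dive_\Ga\E_T$, $\f\cdot\n$ and $\dive_\Ga\g$ through the impedance condition; exploiting that coupling is the missing idea.
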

		\begin{proof} For the proofs of \eqref{eq:stability0} and  \eqref{eq:stability1} we refer to \cite[Theorem~3.1 and Remark 4.6]{Hiptmair2011STABILITYRF}.  
			
			In order to prove \eqref{eq:H2stab}, we rewrite the Maxwell equations \eqref{eq:eq}--\eqref{eq:bc} as
			\begin{alignat}{2} 
				\curl \curl\E- \E  &= \f+(\ka^2-1)\E   \qquad &&{\rm in }\  \Omega,    \\
				\curl\E \times  \n -\ii\la\E_T &=\g +\ii(\ka-1)\la\E_T  \qquad  &&{\rm on }\ \Ga.
			\end{alignat} 
			Recall that $\dive_\Ga(\bm v\times\n)=\n\cdot\curl\bm v$ (see, e.g., \cite[(3.52)]{monk2003}). We have from \eqref{eq:bc} and \eqref{eq:eq} that
			\eqn{
				\dive_\Ga\big(\ii\ka\la\E_T+\g\big)&=\dive_\Ga(\curl \E \times  \n)=(\curl\curl \E)\cdot\n=(\ka^2 \E+\f)\cdot\n,} which implies that
			\eqn{
				\dive_\Ga(\ii\la\E_T)=\ka \E\cdot\n+\ka^{-1}\big(\f\cdot\n-\dive_\Ga\g\big).} Therefore, from \cite[Theorem~1]{chen24}, we have 
			\eqn{
				\|\E\|_2&\ls \big\|\f+(\ka^2-1)\E\big\|+\big\|\g +\ii(\ka-1)\la\E_T\big\|_{\frac12,\Ga}\\
				&\quad+\big\|(\f+(\ka^2-1)\E)\cdot\n-\dive_\Ga\g -\ii(\ka-1)\dive_\Ga(\la \E_T)\big\|_{\frac12,\Ga}\\
				&=\big\|\f+(\ka^2-1)\E\big\|+\big\|\g +\ii(\ka-1)\la\E_T\big\|_{\frac12,\Ga}\\
				&\quad+\big\|\ka^{-1}(\f\cdot\n-\dive_\Ga\g)+(\ka-1)\E\cdot\n\big\|_{\frac12,\Ga}\\
				&\ls \|\f\|+\ka^2\|\E\|+\|\g\|_{\frac{1}{2},\Ga}+\ka\|\E\|_1+\ka^{-1}\|\f\cdot\n\|_{\frac{1}{2},\Ga}+\ka^{-1}\|\dive_\Ga\g\|_{ \frac{1}{2},  \Ga },
			}
			where we have used the trace inequality $\|\bv\|_{\frac12,\Ga}\ls \|\bv\|_1$ to derive the last inequality. 
			Then \eqref{eq:H2stab} follows by combining \eqref{eq:stability0}--\eqref{eq:stability1} and the above estimate. This completes the proof of the lemma.
		\end{proof}
		
		For the estimate of $\|\curl  \E \| + \ka \|\E \|$ in \eqref{eq:stability0}, we refer to 
		\cite[Proposition~3.6]{melenk2023wavenumber} and \cite[Theorem~3.1]{Hiptmair2011STABILITYRF}. The estimate of $\ka \|\E_T \|_{\Ga}$ in 
		\eqref{eq:stability0} and  \eqref{eq:stability1} can be obtained by using \cite[Theorem~3.1 and Remark 4.6]{Hiptmair2011STABILITYRF}
		and mimicking the proof of \cite[Proposition~3.6]{melenk2023wavenumber}.  
		\begin{remark}\label{rem3.1} {\rm (a)} The regularity estimate \eqref{eq:H2stab} was proved in 
			\cite[Theorem 3.4]{lu2018regularity} when the domain $\Om$ is $C^{3}$ and in \cite[Lemma~5.1]{melenk2023wavenumber} when $\Om$ is sufficiently smooth.
			
			{\rm (b)} \cite[Theorem~1]{chen24} gave a regularity estimate by assuming  only $C^{2}$-domain, 
			however, explicit dependence on the wave number $\ka$ was not considered there.
			
			{\rm (c)} If $\dive\f\neq 0$, we can obtain the estimates of $\E$ as follows.
			Inspired by the proof of \cite[Proposition~3.6]{melenk2023wavenumber}, let $\vp\in H_0^1(\Om)$ satisfy
			\eqn{-\De\vp=\dive\f.}
			Let $\bm u=\E-\ka^{-2}\na\vp$. Clearly, $\bm u$ satisfies
			\begin{alignat}{2} 
				\curl \curl \bm u- \ka^2 \bm u &= \f+\na\vp   \qquad &&{\rm in }\  \Omega,    \\
				\curl \bm u \times  \n -\ii\ka\la\bm u_T &=\g  \qquad &&{\rm on }\  \Ga.
			\end{alignat} 
			We have $\dive(\f+\na\vp)=0$ and $\dive\bm u=0$. Therefore, we can apply Lemma~\ref{thm:stability} to 
			obtain estimates for $\bm u$ and then apply the triangle inequality to obtain estimates for $\E$. 
			In particular, the stability estimate \eqref{eq:stability0} still holds, the regularity estimate \eqref{eq:stability1} 
			holds if an additional term $k^{-1}\|\dive\f\|$ is added to its right-hand side, and the $\bH^2$ regularity 
			estimate \eqref{eq:H2stab} holds if the domain $\Om$ is $C^3$ and an additional term $\ka^{-3}\|\dive\f\|_1$ is added to $\Mfg$.  
		\end{remark}

		Next we introduce two  $\bH(\curl)$-elliptic projections and recall their error estimates, 
		which will be used in our preasymptotic error analysis for EEM. Let
		\eq{\label{eq:hat a}
			{\hat a}({\bm u}, {\bm v})= a({\bm u}, {\bm v} )+2\ka^2({\bm u}, {\bm v})=\left(\curl {\bm u} ,
			\curl {\bm v}\right) +\ka^{2}\left({\bm u}, {\bm v} \right)-\ii \ka \lambda\left\langle{\bm u}_T,{\bm v}_T\right\rangle.}  
		For any ${\bm u}\in\V$, define its   $\bH(\curl)$-elliptic  projections ${\bm P}_h^\pm{\bm u}\in\V_h$ as
		\begin{align}
			\hat{a}\big({\bm u}-{\bm P}_h^+{\bm u},{\bm v}_h\big)=0, \quad
			\hat{a}\big({\bm v}_h,{\bm u}-{\bm P}_h^-{\bm u}\big)=0    \quad \forall {\bm v}_h\in\Vh. \label{eq:proj} 
		\end{align}
		
		We have the following error estimates for the projections, whose proofs can be found in \cite{pplu2019} and are also given in the appendix for the reader's convenience.
		\begin{lemma}\label{lem:Pherror}
			Suppose that $\bm u\in\bH^2(\Om)$. Then
			\begin{subequations}
				\begin{align}
					\norme{ {\bm u}-\boldsymbol{P}^{\pm}_h{\bm u}}&\ls \inf_{{\bm v}_h\in\V_h}\norme{{\bm u}-{\bm v}_h}
					\ls h |{\curl \bm u}|_{1}+C_{\ka h}(\ka h)^{\frac{1}{2}}h |{\bm u}|_{2},
					\label{eq:Pherror_energy}\\
					\|{\bm u}-\boldsymbol{P}^{\pm}_h{\bm u}\|    &\ls C_{\ka h} h^{2 }|{\bm u}|_{2}, \label{eq:Pherror_l2} \\
					\|({\bm u}-\boldsymbol{P}^{\pm}_h{\bm u})_T\|_{\Ga} & \ls C_{ \ka h}h^{\frac{3}{2}}|{\bm u}|_2\label{eq:boundPherror}.
				\end{align}
			\end{subequations} 		
		\end{lemma}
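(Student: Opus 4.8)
The plan is to regard $\hat a$ as a bounded and coercive sesquilinear form on $\V$ with respect to $\norme{\cdot}$, so that the projections are quasi-optimal by C\'ea's lemma, and then to estimate the best approximation by interpolation; the $\bL^2$ and boundary bounds follow by a duality argument and a trace/inverse argument. First I would record the two key properties of $\hat a$, both uniform in $\ka$ and $h$. Boundedness $|\hat a(\bu,\bv)|\ls\norme{\bu}\,\norme{\bv}$ is Cauchy--Schwarz after writing the three terms as $\|\curl\bu\|\,\|\curl\bv\|$, $(\ka\|\bu\|)(\ka\|\bv\|)$ and $(\sqrt{\ka\la}\,\|\bu_T\|_\Ga)(\sqrt{\ka\la}\,\|\bv_T\|_\Ga)$. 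Coercivity uses $\re\hat a(\bv,\bv)=\|\curl\bv\|^2+\ka^2\|\bv\|^2$ and $\im\hat a(\bv,\bv)=-\ka\la\|\bv_T\|_\Ga^2$, so that $|\hat a(\bv,\bv)|\ge\tfrac12\big(\re\hat a(\bv,\bv)+|\im\hat a(\bv,\bv)|\big)=\tfrac12\norme{\bv}^2$; this is exactly where the sign change from $a$ to $\hat a$ matters. With these, \eqref{eq:proj} defines $\boldsymbol{P}^{\pm}_h$ uniquely, and the Galerkin orthogonality together with boundedness and coercivity gives $\norme{\bu-\boldsymbol{P}^{\pm}_h\bu}\ls\inf_{\vh\in\Vh}\norme{\bu-\vh}$, the first inequality of \eqref{eq:Pherror_energy} (for $\boldsymbol{P}^-_h$ one uses orthogonality in the first slot; the argument is symmetric).

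For the second inequality I would bound the infimum by the error of a fixed interpolant $\boldsymbol{\Pi}_h\bu\in\Vh$ and substitute componentwise estimates into $\norme{\bu-\boldsymbol{\Pi}_h\bu}^2=\|\curl(\bu-\boldsymbol{\Pi}_h\bu)\|^2+\ka^2\|\bu-\boldsymbol{\Pi}_h\bu\|^2+\ka\la\|(\bu-\boldsymbol{\Pi}_h\bu)_T\|_\Ga^2$. The commuting-diagram property $\curl\boldsymbol{\Pi}_h\bu=\boldsymbol{\pi}_h\curl\bu$ with the associated divergence-conforming interpolant gives $\|\curl(\bu-\boldsymbol{\Pi}_h\bu)\|\ls h|\curl\bu|_1$; the $\bL^2$ interpolation estimate gives $\|\bu-\boldsymbol{\Pi}_h\bu\|\ls h^2|\bu|_2$; and a scaled trace inequality $\|\bm\phi\|_{0,\pa K}^2\ls h^{-1}\|\bm\phi\|_{0,K}^2+h|\bm\phi|_{1,K}^2$ applied to $\bm\phi=\bu-\boldsymbol{\Pi}_h\bu$ yields $\|(\bu-\boldsymbol{\Pi}_h\bu)_T\|_\Ga\ls h^{3/2}|\bu|_2$. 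These produce $h^2|\curl\bu|_1^2+(\ka h)^2h^2|\bu|_2^2+(\ka h)h^2|\bu|_2^2$, and absorbing the prefactors $(\ka h)^2$ and $(\ka h)$ into $C_{\ka h}^2\,(\ka h)$ gives \eqref{eq:Pherror_energy}.

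The $\bL^2$ estimate \eqref{eq:Pherror_l2} requires an Aubin--Nitsche duality argument, since the energy bound only controls $\ka\|\bu-\boldsymbol{P}^{\pm}_h\bu\|$ and is therefore short one power of $h$. With $\bm e:=\bu-\boldsymbol{P}^+_h\bu$, I would take the adjoint solution $\w\in\V$ of $\hat a(\bv,\w)=(\bv,\bm e)$ for all $\bv\in\V$; choosing $\bv=\bm e$ and using the orthogonality $\hat a(\bm e,\vh)=0$ gives $\|\bm e\|^2=\hat a(\bm e,\w-\boldsymbol{\Pi}_h\w)\ls\norme{\bm e}\,\norme{\w-\boldsymbol{\Pi}_h\w}$. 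By the interpolation estimate applied to $\w$, $\norme{\w-\boldsymbol{\Pi}_h\w}\ls h|\curl\w|_1+C_{\ka h}(\ka h)^{1/2}h|\w|_2$, while $\norme{\bm e}\ls C_{\ka h}h|\bu|_2$ follows from \eqref{eq:Pherror_energy} and $|\curl\bu|_1\ls|\bu|_2$. The crux --- and the main obstacle --- is the regularity input for $\w$: because the adjoint form is the \emph{coercive} $\hat a$ (with $+\ka^2$), testing with $\w$ yields $\ka^2\|\w\|\ls\|\bm e\|$, and this gain should upgrade to the $\ka$-clean bounds $|\w|_2\ls\|\bm e\|$ and $|\curl\w|_1\ls\|\bm e\|$ --- in contrast to the weaker $|\w|_2\ls\ka\|\bm e\|$ that a naive use of the indefinite estimate \eqref{eq:H2stab} of Lemma~\ref{thm:stability} would suggest. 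Securing these clean constants gives $\norme{\w-\boldsymbol{\Pi}_h\w}\ls C_{\ka h}h\|\bm e\|$ and hence $\|\bm e\|\ls C_{\ka h}h^2|\bu|_2$.

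Finally, for \eqref{eq:boundPherror} I would split $(\bu-\boldsymbol{P}^{\pm}_h\bu)_T=(\bu-\boldsymbol{\Pi}_h\bu)_T+(\boldsymbol{\Pi}_h\bu-\boldsymbol{P}^{\pm}_h\bu)_T$. The first term is bounded by $h^{3/2}|\bu|_2$ as in the interpolation step; the second is a discrete function, so the inverse trace inequality $\|(\vh)_T\|_\Ga\ls h^{-1/2}\|\vh\|$ together with the already proven estimate \eqref{eq:Pherror_l2} and the $\bL^2$ interpolation bound gives $h^{-1/2}\cdot C_{\ka h}h^2|\bu|_2=C_{\ka h}h^{3/2}|\bu|_2$, completing the proof.
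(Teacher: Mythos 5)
Your energy-norm argument (continuity and coercivity of $\hat a$ with respect to $\norme{\cdot}$, C\'ea, then interpolation estimates) is exactly what the paper intends by ``the proof of \eqref{eq:Pherror_energy} is obvious,'' and your derivation of \eqref{eq:boundPherror} from \eqref{eq:Pherror_l2} via an inverse trace inequality would be fine \emph{if} \eqref{eq:Pherror_l2} were already in hand. The genuine gap is in your Aubin--Nitsche step for \eqref{eq:Pherror_l2}. You pose the dual problem $\hat a(\bv,\w)=(\bv,\bm e)$ with data $\bm e=\bu-\boldsymbol{P}^{+}_h\bu$ and assert $|\w|_2\ls\|\bm e\|$. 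But the strong form is $\curl\curl\w+\ka^2\w=\bm e$, so $\dive\w=\ka^{-2}\dive\bm e$ in the distributional sense, and $\dive\bm e$ is \emph{not} controlled: $\boldsymbol{P}^{+}_h\bu$ is an edge-element function whose distributional divergence contains surface contributions from the jumps of its normal component across faces, so $\dive\bm e\notin L^2(\Om)$ in general. Maxwell regularity of the type $\|\w\|_2\ls\|\curl\curl\w\|+\|\nabla\dive\w\|+\dots$ therefore does not apply, $\w\notin\bH^2(\Om)$, and even the N\'ed\'elec interpolant $\boldsymbol{\Pi}_h\w$ you test against need not be defined. The coercivity of $\hat a$ does give you the clean bounds $\ka^2\|\w\|\le\|\bm e\|$ and $\|\curl\curl\w\|\ls\|\bm e\|$, so your intuition about the sign of the $\ka^2$ term is right, but it cannot compensate for the missing control of $\dive\w$.

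This is precisely the obstruction the paper's appendix is built to circumvent. It never applies duality to $\bm e$ itself: it first performs a (discrete) Helmholtz decomposition of the \emph{discrete} error $\boldsymbol{\Phi}_h=\boldsymbol{P}^{+}_h\bu-\bv_h=\w_h^0+\nabla r_h^0$, uses the orthogonality $(\bu-\boldsymbol{P}^{+}_h\bu,\nabla\phi_h^0)=0$ (a consequence of \eqref{eq:proj} with gradient test functions) to kill the gradient part, and then runs the duality argument only against the divergence-free field $\w^0$ with $\w^0\in\bH_0(\curl;\Om)$, for which the dual solution genuinely lies in $\bH^2(\Om)$ with $\ka$-uniform bounds (\eqref{A-regularity}--\eqref{A-regularity-H2}). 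Note also that the paper must prove the boundary estimate \eqref{eq:boundPherror} \emph{first} (its Lemma on $\|(\bu-\tilde\bu_h)_T\|_\Ga$, again via a Helmholtz decomposition and a separate duality argument), because the trace term feeds into the $\bL^2$ bound --- the reverse of the order you propose. To repair your write-up you would need to import this decomposition-plus-orthogonality device; without it the central step does not go through.
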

		
		Let $\V_{h}^{0}:=\V_{h} \cap \bH_{0}(\curl; \Omega ) $. The following lemma states that any discrete function in $\V_h$
		has an  ``approximation" in $\V_h^0$, whose error can be bounded by its tangential components on $\Ga$. 
		The proof is similar to that of \cite[Proposition~4.5]{houston2005interior}, but simpler.
		\begin{lemma}\label{lem:Vh0}
			For any $\bm v_h\in \V_h$, there exists $\bm v_h^0\in \V_h^0$, such that
			\eq{\label{eq:Vh0}\|\bm v_h-\bm v_h^0\| \ls h^{\frac{1}{2}}\|\bm v_{h,T}\| _{ \Ga }\qaq \|\curl(\bm v_h-\bm v_h^0)\|
				\ls h^{-\frac{1}{2}}\|\bm v_{h,T}\| _{ \Ga }.}
		\end{lemma}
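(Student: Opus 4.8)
The plan is to produce $\bm v_h^0$ from $\bm v_h$ by killing exactly the degrees of freedom (DOFs) that sit on the boundary. Recall that the lowest-order N\'ed\'elec space of the second type $\V_h$ is unisolvent with respect to the edge DOFs $D_{e,q}(\bm v_h):=\int_e(\bm v_h\cdot\tau_e)\,q\,ds$, $q\in\mathcal P_1(e)$, ranging over all edges $e$ of $\Th$, and that, $\V_h$ being $\bH(\curl)$-conforming, the tangential trace of $\bm v_h$ on a boundary face is determined by the DOFs on the three edges of that face. I would therefore define $\bm v_h^0\in\V_h$ to share all interior-edge DOFs with $\bm v_h$ while having vanishing DOFs on every edge lying on $\Ga$. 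Then $\bm v_h^0\times\n=0$ on $\Ga$, so $\bm v_h^0\in\V_h^0$, and the difference $\bm w_h:=\bm v_h-\bm v_h^0$ has zero DOFs on all interior edges and agrees with $\bm v_h$ on the boundary edges. Consequently $\bm w_h$ vanishes on every element with no edge on $\Ga$, hence is supported on the boundary layer $\Th^\partial:=\{K\in\Th:\pa K\ \text{has an edge on}\ \Ga\}$.

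The estimates then follow from a two-scale counting argument. On a fixed $K\in\Th^\partial$, writing $\bm w_h|_K\circ F_K=(F_K')^{-T}\hat{\bm w}$ with $\hat{\bm w}\in(\mathcal P_1(\hat K))^3$ and using that $\curl$ transforms contravariantly (Piola), the shape-regularity bounds on $F_K'$ give the scalings $\|\bm w_h\|_{0,K}\simeq h_K^{1/2}\|\hat{\bm w}\|_{0,\hat K}$ and $\|\curl\bm w_h\|_{0,K}\simeq h_K^{-1/2}\|\widehat{\curl}\,\hat{\bm w}\|_{0,\hat K}$. Since all norms on the finite-dimensional reference space are equivalent and $\hat{\bm w}$ carries only the reference DOFs attached to boundary edges, one has $\|\hat{\bm w}\|_{\bH(\curl;\hat K)}\ls(\sum_{\hat e}|\hat D_{\hat e}(\hat{\bm w})|^2)^{1/2}$ summed over the boundary edges of $\hat K$, and the invariance of tangential line integrals under the covariant map identifies $\hat D_{\hat e}(\hat{\bm w})$ with the physical DOF $D_e(\bm v_h)$.

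It remains to control these boundary-edge DOFs by $\|\bm v_{h,T}\|_{0,\Ga}$. For a boundary face $\fa$ containing such an edge, the trace $\bm v_{h,T}|_\fa$ is itself a two-dimensional N\'ed\'elec function whose edge DOFs are exactly the $D_e(\bm v_h)$, $e\subset\fa$; a $2$D covariant scaling on $\fa$ followed by the reference-face norm equivalence yields $\sum_{e\subset\fa}|D_e(\bm v_h)|^2\simeq\|\bm v_{h,T}\|_{0,\fa}^2$, with constants depending only on $\si$. Chaining the three steps gives, for each $K\in\Th^\partial$ (with $\Ga_K$ the union of the $O(1)$ boundary faces carrying the boundary edges of $K$),
\[ \|\bm w_h\|_{0,K}\ls h^{1/2}\|\bm v_{h,T}\|_{0,\Ga_K},\qquad \|\curl\bm w_h\|_{0,K}\ls h^{-1/2}\|\bm v_{h,T}\|_{0,\Ga_K}. \]
Squaring, summing over $K\in\Th^\partial$, and invoking the finite overlap of the patches $\Ga_K$ delivers the two bounds of \eqref{eq:Vh0}.

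I expect the only real difficulty to be the careful bookkeeping of the powers of $h$ through the two nested scalings — the $3$D element scaling for $\bm w_h$ and the $2$D face scaling relating the edge DOFs to $\|\bm v_{h,T}\|_{0,\fa}$ — made slightly delicate by the fact that $F_K$ is merely a $C^1$-diffeomorphism rather than affine; the remedy is to carry out every transformation through the covariant (Piola) map and to absorb the geometric factors into constants depending only on the shape-regularity parameter $\si$. The remaining ingredients (unisolvence, equivalence of norms on the reference configuration, and the finite-overlap summation) are entirely routine.
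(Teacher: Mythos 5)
Your construction and estimates are correct and follow essentially the same route as the paper: zero out the boundary-edge degrees of freedom, note the difference is supported on the boundary layer, and control it by the tangential trace via reference-element scaling and norm equivalence. The only (cosmetic) difference is that you obtain the curl bound by a second direct Piola scaling, whereas the paper simply applies the inverse inequality to the already-established $\bL^2$ bound.
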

		\begin{proof} We only give a sketch of the proof for the reader's convenience. In fact, 
			denoting by $M_{e,j}(\bm v), j=1,2$ the degrees of freedom (moments) of a function 
			$\bm v$ on an edge $e$ of an element, $\bm v_h^0$ can be defined simply by changing those 
			degrees of freedom of $\bm v_h$ on the boundary $\Ga$ to 0:
			\eqn{M_{e,j}(\bm v_h^0)=
				\begin{cases}
					0  &\text{if } e \subset\Ga,\\
					M_{e,j}(\bm v_h) &\text{otherwise, }
				\end{cases}\quad j=1,2.}
			Then by some simple calculations, we obtain
			\eqn{\|\bm v_h-\bm v_h^0\|^2 &=\sum_{K\in\Th,K\cap\Ga\neq\emptyset}\norm{\bm v_h-\bm v_h^0}_K^2
				\ls \sum_{K\in\Th,K\cap\Ga\neq\emptyset} h_K    \sum_{e\subset\pa K\cap\Ga, j=1,2}|M_{e,j}(\bm v_h)|^2
				\ls h \|\bm v_{h,T}\| _{ \Ga }^2,}
			which implies the first inequality in \eqref{eq:Vh0}, which together with the inverse inequality implies
			the second one. This completes the proof of the lemma. 
		\end{proof}

		Let $U_{h}:=\{u \in H^{1}(\Omega):(u|_{K})\circ F_K \in \mathcal{P}_{2}(\hat{K}) \quad \forall K \in \mathcal{T}_{h} \}$ 
		and $U_{h}^{0}:=U_{h} \cap H_{0}^{1}(\Omega)$,
		where $\mathcal{P}_{2}(\hat{K})$ is the set of quadratic polynomials on $\hat{K}$.
		Obviously $\nabla U_{h}$ 
		and $\nabla U_{h}^{0}$ are subspaces of $\V_{h}$ and
		$\V_{h}^{0}$, respectively. The following lemma gives both the Helmholtz decomposition and the discrete
		Helmholtz decomposition for each $\bv_{h} \in \V_{h}^0$.
		
		\begin{lemma}[Helmholtz Decomposition]\label{lem:HD} \hfill
			
			For any $\vh \in \V_{h}^{0}$, there exist $r \in H_{0}^{1}(\Omega), r_{h} \in U_{h}^{0}, \w \in \bH_{0}(\curl , \Omega)$,
			and $\wh \in$ $\V_{h}^{0}$, such that
			\begin{align}
				&\vh=\nabla r+\w=\nabla r_{h}+\wh , \label{HD1}   \\
				&\dive \w =0 \quad \text { in } \Omega, \quad\left(\wh, \nabla \psi_{h}\right)=0 \quad \forall \psi_{h} \in U_{h}^{0}, \label{HD1div} \\
				&\left\|\w-\w_{h}\right\|  \lesssim h\left\|\curl  \bv_{h}\right\|.\label{HD1w}
			\end{align}
		\end{lemma}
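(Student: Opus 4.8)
The plan is to construct both decompositions by solving a continuous and a discrete Poisson problem for the gradient parts, and then to reduce the error bound \eqref{HD1w} to a best-approximation estimate that is controlled through the $\bH^1$ regularity of the solenoidal part together with a commuting interpolation operator.

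First I would construct the continuous decomposition. Define $r\in H_0^1(\Om)$ as the unique solution of $(\nabla r,\nabla s)=(\vh,\nabla s)$ for all $s\in H_0^1(\Om)$, which is well posed by the Poincar\'e inequality and the Lax--Milgram lemma, and set $\w:=\vh-\nabla r$. Since $\vh\in\V_h^0\subset\bH_0(\curl;\Om)$ and $\nabla r\times\n|_\Ga=0$ (as $r|_\Ga=0$), we get $\w\in\bH_0(\curl;\Om)$ with $\curl\w=\curl\vh$; testing against $\nabla s$ shows $\dive\w=0$ in $\Om$, which gives \eqref{HD1} and the first identity in \eqref{HD1div}. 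The discrete decomposition is obtained in exactly the same way with $H_0^1(\Om)$ replaced by $U_h^0$: let $r_h\in U_h^0$ solve $(\nabla r_h,\nabla s_h)=(\vh,\nabla s_h)$ for all $s_h\in U_h^0$ and put $\wh:=\vh-\nabla r_h$. Because $\nabla U_h^0\subset\V_h^0$ we have $\wh\in\V_h^0$, and $(\wh,\nabla\psi_h)=0$ for all $\psi_h\in U_h^0$ by construction, which is the second part of \eqref{HD1div}.

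For the error estimate \eqref{HD1w} I would first observe that $\w-\wh=\nabla(r_h-r)$, so $r_h$ is precisely the Ritz (Galerkin) projection of $r$ onto $U_h^0$, and C\'ea's lemma gives $\|\w-\wh\|=\|\nabla(r-r_h)\|\le\inf_{s_h\in U_h^0}\|\nabla r-\nabla s_h\|=\inf_{s_h\in U_h^0}\|\vh-\nabla s_h-\w\|$. The two crucial structural facts are, first, that $\w$ is divergence-free with $\curl\w=\curl\vh$, whence, since $\Om$ is $C^2$ and star-shaped (hence simply connected with connected boundary), the div--curl regularity estimate yields $\w\in\bH^1(\Om)$ with $\|\w\|_1\lesssim\|\curl\w\|+\|\dive\w\|=\|\curl\vh\|$; and second, the commuting property of a quasi-interpolation operator $\Pi_h$ onto $\V_h^0$, which gives $\curl\Pi_h\w=\Pi_h^{\dive}\curl\w=\curl\vh$ (using that the lowest-order face interpolant reproduces the piecewise-constant, normal-continuous field $\curl\vh\in\curl\V_h$). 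I would then choose $s_h\in U_h^0$ so that $\nabla s_h=\vh-\Pi_h\w$: this is legitimate because $\vh-\Pi_h\w\in\V_h^0$ is curl-free and therefore is a discrete gradient by exactness of the discrete de Rham sequence $U_h^0\xrightarrow{\nabla}\V_h^0\xrightarrow{\curl}\cdots$ on the simply connected domain (this exactness is exactly why $U_h$ was taken to be the $\mathcal P_2$ Lagrange space matching the second-kind linear N\'ed\'elec space $\V_h$). With this choice $\|\w-\wh\|\le\|\vh-\nabla s_h-\w\|=\|\Pi_h\w-\w\|\lesssim h\|\w\|_1\lesssim h\|\curl\vh\|$, which is \eqref{HD1w}.

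The main obstacle is the error estimate \eqref{HD1w}, and within it the interplay of the two ingredients above: the wave-number-free $\bH^1$ regularity $\|\w\|_1\lesssim\|\curl\vh\|$ of the continuous solenoidal part, and a commuting interpolation operator that is bounded and first-order accurate on $\bH^1(\Om)$. I would be careful to use a quasi-interpolant of Sch\"oberl or Christiansen--Winther type rather than the canonical N\'ed\'elec interpolant, since the latter requires edge traces and is not defined on all of $\bH^1(\Om)$ in three dimensions, whereas the quasi-interpolant retains both the commuting diagram and the optimal approximation property used above. The remaining pieces---well-posedness of the two Poisson problems, the tangential-trace bookkeeping for $\w\in\bH_0(\curl;\Om)$, and the inclusion $\nabla U_h^0\subset\V_h^0$---are routine.
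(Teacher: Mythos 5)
Your proof is correct and is essentially the standard argument behind the citation the paper gives for this lemma (the paper omits the details, referring to Monk's Theorem~3.45 and Lemma~7.6): continuous and discrete scalar potentials from Poisson problems, $\bH^1$ regularity of the solenoidal part via the embedding of $\bH_{0}(\curl;\Om)\cap\bH(\dive;\Om)$ together with the Friedrichs inequality on the simply connected, connected-boundary domain, and a commuting (quasi-)interpolant plus discrete exactness to reduce $\|\w-\wh\|$ to a best-approximation error of order $h\|\curl\vh\|$. Your packaging of the last step through C\'ea's lemma for the Ritz projection $r_h$ is equivalent to the usual orthogonality argument, and your caution about using a Sch\"oberl/Christiansen--Winther type commuting quasi-interpolant (rather than the canonical N\'ed\'elec interpolant, which is not defined on all of $\bH^1$) is exactly the right technical point to flag.
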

		\begin{proof}
			This Lemma may be proved by using  \cite[Theorem~3.45, Lemma~7.6]{monk2003}.
			We omit the details.
		\end{proof}

		\section{{Preasymptotic error estimates for EEM} }\label{sc4}
		
		In this section, we  first establish
		{some stability estimates  for the EEM \eqref{eq:EEM} by using the so-called  ``modified duality argument"
			developed for the preasymptotic analysis of FEM for Helmholtz equation \cite{ZhuWu2013II}.} 
		Then we use the resulting stability estimates  to derive preasymptotic error estimates for the EEM.
		
		\subsection{Stability estimates of EEM}

		The following theorem gives the stability of the EEM solution. 
		\begin{theorem}\label{thm:EEMStability}
			Let $\E_h$	 be the solution to \eqref{eq:EEM},
			$\f\in \bL^2(\Om)$, $\g\in\bL^2(\Ga)$. There exists a constant $C_0>0$ independent 
			of $\ka$ and $h$ such that if  {$\ka^3 h^2\le C_0$},  then
			\begin{equation}
				\|\curl\E_h\| +\ka \|\E_h\|+\ka \|{\Eht}\|_{\Ga} \ls \|\f\| +  \|\g\|_{\Ga}.
				\label{eq:EEMStability}
			\end{equation} 
			And as a consequence, the EEM \eqref{eq:EEM} is well-posed.
		\end{theorem}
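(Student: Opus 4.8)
The plan is to reduce the full energy bound to a single estimate for $\ka\|\Eh\|$ and then to establish that estimate by a modified duality argument. First I would test \eqref{eq:EEM} with $\bv_h=\Eh$. Taking the imaginary part isolates the boundary contribution, $\ka\la\|\Eht\|_\Ga^2=-\im\big((\f,\Eh)+\langle\g,\Eht\rangle\big)$, so that after a Young inequality $\|\Eht\|_\Ga$ is controlled by the data together with $\|\Eh\|^{1/2}$. Taking the real part gives $\|\curl\Eh\|^2=\ka^2\|\Eh\|^2+\re\big((\f,\Eh)+\langle\g,\Eht\rangle\big)$, which bounds $\|\curl\Eh\|$ once $\ka\|\Eh\|$ is under control. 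Hence the whole theorem hinges on proving $\ka\|\Eh\|\ls\|\f\|+\|\g\|_\Ga$.

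For this I would use the decomposition \eqref{eq:Ehdecomp}. By Lemma \ref{lem:Vh0} I replace $\Eh$ by some $\boldsymbol{E}_h^0\in\Vh^0$ up to a near-boundary remainder $\Eh-\boldsymbol{E}_h^0$ controlled by $\|\Eht\|_\Ga$, and then apply the discrete Helmholtz decomposition of Lemma \ref{lem:HD} to write $\boldsymbol{E}_h^0=\na r_h+\wh$ with $\wh$ discretely divergence-free and $(\wh,\na r_h)=0$. The gradient part is handled algebraically: testing \eqref{eq:EEM} with $\bv_h=\na s_h$, $s_h\in U_h^0$, annihilates the curl term and (since $s_h|_\Ga=0$, whence $(\na s_h)_T=0$ on $\Ga$) the boundary term, leaving $\ka^2(\Eh,\na s_h)=-(\f,\na s_h)$. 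Choosing $s_h=r_h$ and using $(\wh,\na r_h)=0$ then yields $\ka\|\na r_h\|\ls\ka^{-1}\|\f\|+\ka h^{1/2}\|\Eht\|_\Ga$, both of which are data or lower-order.

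The crux is the divergence-free part, which I would estimate by duality, and here the decomposition plays its essential role. I introduce the continuous Helmholtz decomposition $\boldsymbol{E}_h^0=\na r+\w$ with $\dive\w=0$, so that Lemma \ref{lem:HD} gives $\|\w-\wh\|\ls h\|\curl\boldsymbol{E}_h^0\|$ and it suffices to bound $\ka\|\w\|$. Let $\z\in\V$ solve the adjoint problem $a(\bv,\z)=\ka^2(\bv,\w)$ for all $\bv\in\V$. The point of working with $\w$ rather than $\Eh$ is that the source $\ka^2\w$ is now divergence-free, hence $\z$ is itself divergence-free and the wave-number-explicit regularity of Lemma \ref{thm:stability} legitimately applies, giving $\|\z\|_2\ls\ka^3\|\w\|$ and $\|\curl\z\|+\ka\|\z\|+\ka\|\z_T\|_\Ga\ls\ka^2\|\w\|$. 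I then write $\ka^2\|\w\|^2=a(\w,\z)$ and split $\w=\Eh-(\Eh-\boldsymbol{E}_h^0)-\na r$: the gradient piece $a(\na r,\z)$ drops out because $\dive\z=0$ and $r|_\Ga=0$; the remainder piece $a(\Eh-\boldsymbol{E}_h^0,\z)$ is a boundary-layer term; and the $\Eh$ piece is treated by the Galerkin orthogonality, replacing $\z$ by $\pz$ and using the $\hat a$-orthogonality of the projection to convert the consistency error into $-2\ka^2(\Eh,\z-\pz)$. By the $\bL^2$ projection estimate \eqref{eq:Pherror_l2} and $\|\z\|_2\ls\ka^3\|\w\|$ this dominant term is $\ls C_{\ka h}\ka^{5}h^{2}\|\Eh\|\,\|\w\|$, while the data terms $(\f,\pz)+\langle\g,(\pz)_T\rangle$ contribute $\ls\ka\big(\|\f\|+\|\g\|_\Ga\big)\|\w\|$.

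Dividing by $\|\w\|$ and then by $\ka$ turns this into the crucial relation $\ka\|\w\|\ls C_{\ka h}\ka^{3}h^{2}\,(\ka\|\Eh\|)+\|\f\|+\|\g\|_\Ga+(\text{near-boundary terms})$, in which the pollution constant scales exactly like $\ka^3h^2$. Recalling $\ka\|\wh\|\le\ka\|\w\|+\ka h\|\curl\boldsymbol{E}_h^0\|$ and collecting the three parts of the decomposition gives $\ka\|\Eh\|\ls C_{\ka h}\ka^{3}h^{2}\,(\ka\|\Eh\|)+\|\f\|+\|\g\|_\Ga+(\text{l.o.t.})$; choosing $C_0$ so small that $C_{\ka h}\ka^3h^2\le\tfrac12$ lets me absorb the pollution term, after which the lower-order and boundary-layer contributions (which carry small powers of $\ka h$ or reduce to the already-controlled $\|\Eht\|_\Ga$) are absorbed as well, yielding \eqref{eq:EEMStability}. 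Well-posedness then follows immediately, since \eqref{eq:EEM} is a square finite-dimensional system and the a priori bound forces $\Eh=\boldsymbol{0}$ when $\f=\g=\boldsymbol{0}$. I expect the main obstacle to be precisely this duality step: ensuring the dual source is genuinely divergence-free so that the $\bH^2$ regularity \eqref{eq:H2stab} is available (which is the whole reason for the decomposition), and arranging the scaling so the pollution factor is $O(\ka^3h^2)$. A secondary difficulty is estimating $a(\Eh-\boldsymbol{E}_h^0,\z)$ without incurring a negative power of $h$; there I would integrate the curl term by parts against the smooth $\z$ and invoke the first-step bound on $\|\Eht\|_\Ga$ in place of the inverse inequality.
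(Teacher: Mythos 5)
Your overall architecture coincides with the paper's: the testing step with $\bv_h=\Eh$, the reduction of the theorem to a bound on $\ka\|\Eh\|$, the combination of Lemma~\ref{lem:Vh0} with the Helmholtz decomposition of Lemma~\ref{lem:HD}, the algebraic treatment of the gradient part, and the modified duality argument in which the dual solution is replaced by its $\bH(\curl)$-elliptic projection so that the consistency error collapses to $-2\ka^2(\Eh,\z-\pz)$. However, there is a genuine gap in \emph{where} you insert the decomposition in the duality step. You write $\ka^2\|\w\|^2=a(\w,\z)$ and split $\w=\Eh-(\Eh-\Eh^0)-\na r$ \emph{inside the sesquilinear form}, which produces the term $a(\Eh-\Eh^0,\z)$. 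This term cannot be bounded well enough: its zeroth-order part alone satisfies only $\ka^2|(\Eh-\Eh^0,\z)|\ls\ka^2 h^{\frac12}\|\Eht\|_\Ga\cdot\ka\|\w\|$, so after dividing by $\ka^2\|\w\|$ it contributes $\ka h^{\frac12}\|\Eht\|_\Ga$ to the bound for $\|\w\|$. Under the mesh condition $\ka^3h^2\le C_0$ the factor $\ka h^{\frac12}$ behaves like $\ka^{\frac14}$ and is unbounded, so even the sharp Step-1 estimate $\|\Eht\|_\Ga\ls\ka^{-\frac12}\big(\|\f\|\,\|\Eh\|\big)^{\frac12}+\ka^{-1}\|\g\|_\Ga$ yields, after Young's inequality, a data contribution of order $\ka h\|\f\|\simeq\ka^{-\frac12}\|\f\|$ rather than the required $\ka^{-1}\|\f\|$; the resulting stability estimate loses half a power of $\ka$, which would propagate into the error estimates. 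Your proposed fix (integrating the curl term by parts against the smooth $\z$) does not repair this, because $\curl\curl\z=\ka^2(\z+\w)$ reintroduces exactly the same $\ka^2(\Eh-\Eh^0,\cdot)$ contribution.

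The paper avoids the problem by inserting the decomposition at the level of the plain $\bL^2$ pairing rather than inside $a(\cdot,\z)$: it writes $\|\w^0\|^2=(\w^0-\w_h^0,\w^0)+(\Eh^0-\Eh,\w^0)+(\Eh,\w^0)$, using $(\w^0,\na r_h^0)=0$, and applies the dual representation only to the last term $(\Eh,\w^0)=a(\Eh,\ps)$, keeping the legitimate discrete test function $\Eh$ intact. The near-boundary remainder then enters only as $|(\Eh^0-\Eh,\w^0)|\ls h^{\frac12}\|\Eht\|_\Ga\|\w^0\|$, with the harmless factor $h^{\frac12}$, and everything absorbs. You should restructure your duality step accordingly. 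A secondary inaccuracy: your claim $\|\z\|_2\ls\ka^3\|\w\|$ overlooks the term $\ka^{-2}\|\f\cdot\n\|_{\frac12,\Ga}$ in $\Mfg$ --- the dual source is divergence-free, but its normal trace on $\Ga$ does not vanish --- which contributes an extra $\ka\|\w\|_1\ls\ka\|\w\|+\ka h^{-1}\|\Eh\|$ (cf.\ \eqref{eq:w0} and \eqref{eq:stabDP2}); the resulting additional term $\ka h\|\Eh\|^2$ in the duality estimate is still absorbable, but it must be carried.
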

		
		\begin{proof} The proof is divided into the following steps.
			
			{\it Step 1. Estimating $\norme{\E_h}$ and $\|\Eht\|_{\Ga}$ by $\|\E_h\|$.}
			Taking $\bv_h = \E_h $ in \eqref{eq:EEM} and taking the imaginary and real parts on both sides, we may get
			\begin{align*}
				-\im a (\Eh ,\Eh )&=\ka \la \|\Eht\|_{\Ga}^2  \leq \|\f\| \|\E_h\|+\|\g\|_{\Ga}\|\Eht\|_{\Ga}\\  
				& \leq \|\f\| \|\E_h\| +\frac{1}{2\ka\la}\|\g\|_{\Ga}^2+ \frac{\ka\la}{2} \|\Eht\|_{\Ga}^2,\\ 
				(\re-\im) a (\Eh ,\Eh )  & = \norme{\E_h}^2 -2\ka^2 \|\E_h\|^2 =(\re-\im)\big((\boldsymbol{f}, 
				\Eh )+\langle \g,{\E}_{h,T}\rangle \big) \\  
				& \leq \sqrt{2}\Big(\|\f\| \|\E_h\| +\frac{1}{2\ka\la}\|\g\|_{\Ga}^2+ \frac{\ka\la}{2} \|\Eht\|_{\Ga}^2\Big),
			\end{align*}	
			which imply that
			\begin{align*}
				\ka \|\Eht\|_{\Ga}^2 &\ls \|\f\|\|\E_h\|+\ka^{-1}\|\g\|_{\Ga}^2, \\
				\norme{\E_h}^2  &\ls \ka^2 \|\E_h\|^2+ \|\f\| \|\E_h\| +\ka^{-1} \|\g\|_{\Ga}^2, 
			\end{align*}
			and hence
			\begin{align}
				\|\Eht\|_{\Ga} &\ls \|\E_h\|+\ka^{-1}\big(\|\f\|+\|\g\|_{\Ga}\big) , \label{eq:stabbd}\\
				\|\curl\Eh\|  &\ls \ka \|\E_h\|+ \ka^{-\frac12}\big(\|\f\|+\|\g\|_{\Ga}\big). \label{eq:stabEner}
			\end{align}
			
			{\it Step 2. Decomposing $\|\E_h\|$.} According to  Lemma~\ref{lem:Vh0},
			there exists $\Eh^0\in \bH_{0}(\curl; \Om)\cap\V_h$ such that 
			\begin{equation}
				\|\Eh-\Eh^0\| +h\|\curl(\Eh-\Eh^0)\| \ls
				h^{\frac{1}{2}}\|\Eht\| _{ \Ga }.
				\label{eq:ehcerr}
			\end{equation} 
			By Lemma~\ref{lem:HD},    we have the following discrete Helmholtz decomposition
			for $\Eh^0$:
			\begin{equation}
				\Eh^0=\w_{h}^0+\nabla r_h^0,
				\label{eq:EhcHD}
			\end{equation}
			where $r_h^0\in U_h^0$ and $\w_{h}^0\in \V_{h}^0$ is discrete  divergence-free. 
			And there exists $\w^0\in \bH_{0}(\curl; \Om)$ such that $\dive \w^0=0$, $\curl\w^0=\curl\Eh^0$,
			and
			\begin{equation}
				\|\w_{h}^0-\w^0\| \ls h\|\curl\Eh^0\|\ls  h^{\frac{1}{2}}\|\Eht\|_{\Ga} +h\|\curl\Eh\| ,
				\label{eq:stabwerr}
			\end{equation}
			where we have used \eqref{eq:ehcerr} to derive the last inequality. Moreover, 
			from \cite[Theorem 2.12]{amrouche1998vector}, we have 
			\eq{\label{eq:w0}
				\norm{\w^0}_1&\ls \norm{\w^0}+\norm{\curl\w^0}+\norm{\dive \w^0}\notag\\
				&\ls \norm{\w^0}+h^{-\frac{1}{2}}\|\Eht\|_{\Ga} +\|\curl\Eh\|\notag\\
				&\ls \norm{\w^0}+h^{-1}\|\Eh\|,}
			where we have used the inverse inequalities  $h^{-\frac12}\|\Eht\|_{\Ga},\|\curl\Eh\|\ls h^{-1}\|\Eh\|$ 
			to derive the last inequality. We have the following decomposition of $\|\E_h\|^2$:
			\begin{align}\label{eq:Ehdecomp}
				\|\E_h\|^2  & = (\Eh,\Eh) \notag \\
				& = (\Eh, \Eh-\Eh^0 )+(\Eh,\w_h^0-\w^0) +(\Eh,\nabla r_h^0)+(\Eh, \w^0 ).   
			\end{align}
			The first two terms on the right-hand side can be bounded using \eqref{eq:ehcerr} and 
			\eqref{eq:stabwerr}, respectively. To estimate the third term, we set $\bm v_h=\nabla r_h^0$ 
			in \eqref{eq:EEM} and using $\nabla r_h^0\in \bH_0( \curl ; \Omega) \cap \V_{h}$ and $\curl \nabla r_h^0 = 0$ to get
			\[
			a (\Eh,\nabla r_h^0) = -\ka^2(\Eh,\nabla r_h^0) = (\f,\nabla r_h^0),
			\]
			which implies
			\begin{align}
				\big|(\Eh,\nabla r_h^0)\big|  & \ls \ka^{-2}\|\f\|\|\nabla r_h^0\| \ls \ka^{-2}\|\f\|\|\Eh^0\|  \notag \\
				& \ls \ka^{-2}\|\f\|\left(\|\E_h\|+ h^{\frac{1}{2}}\|\Eht\|_{\Ga} \right).
				\label{eq:stabEhrh0}
			\end{align}
			Therefore, using the Cauchy's inequality and the  Young's inequality we obtain
			\begin{equation}
				\|\E_h\| \ls \|\Eh-\Eh^0\|+\| \w_h^0-\w^0\|+ \frac{1}{\ka^2} \|\f\| 
				+h^{\frac{1}{2}}\|\Eht\|_{\Ga} +\|\w^0\|.
				\label{eq:stabEhbound}
			\end{equation}
			Next we estimate the last term in \eqref{eq:stabEhbound}.
			
			{\it Step 3. Estimating $\|\w^0\|$ using a modified duality argument.} 
			Consider  the dual problem:   
			\begin{alignat}{2} 
				\curl\curl \ps -\ka^2\ps   &=\w^0 \qquad &&{\rm in }\  \Om,   \label{eq:stabDP1-1}\\
				\curl \ps\times \n +\ii \ka\la\ps_T   &= \bm{0} \qquad &&{\rm on }\  \Ga  \label{eq:stabDP1-2}.
			\end{alignat} 
			It is easy to verify that $\ps $ satisfies the following variational formulation:
			\begin{equation}
				a ( \bv,\ps)=( \bv,\w^0) \quad \forall \bv \in \V,
				\label{eq:stabdualVP1}
			\end{equation}
			and by Lemma \ref{thm:stability} and \eqref{eq:w0},  there hold the following estimates:
			\begin{align}
				\|\curl  \ps \|_{1}&+\ka\|\curl  \ps \| +
				\ka\|\ps \|_{1}+\ka^{2}\|\ps \|  +\ka^{2}\|\ps_T \|_{\Ga}   \lesssim \ka \| \w^0\|,  \label{eq:stabDP1} \\
				\|\ps\|_{2} &\ls \ka \|\w^0\| +\ka^{-1}  \|\w^0\|_1\ls    \ka \|\w^0\| +\ka^{-1}h^{-1}\|\Eh\|.  \label{eq:stabDP2}
			\end{align} 
			{For simplicity, we suppose that $k^3h^2\ls 1$.}	Using \eqref{eq:proj}, 
			\eqref{eq:stabdualVP1}--\eqref{eq:stabDP2}, and Lemma~\ref{lem:Pherror},   
			we deduce that
			\begin{align}
				\big|	( \Eh,\w^0) \big|
				&=\big| a ( \Eh , \ps) \big|  =\big| a ( \Eh , \pps) +a ( \Eh , \ps-\pps)\big| \notag \\
				&=\big| (\f, \pps)+ \bra \g , (\pps)_T \ket +  a ( \Eh , \ps-\pps) \big|  \notag \\
				&=\big| (\f, \ps)+ \bra \g , \ps_T \ket +  (\f, \pps - \ps )+ 
				\bra \g , (\pps - \ps )_T \ket - 2\ka^2(\Eh,\ps-\pps) \big|  \notag \\ 
				&\ls  \|\f\|\big(\|\ps\|+h^2\|\ps\|_2\big) +\|\g\|_{\Ga}\big(\|\ps_T\|_{\Ga} 
				+ h^{\frac{3}{2}}\|\ps\|_2 \big) +\ka^2 h^2 \|\E_h\| \|\ps\|_2           \notag \\ 
				&\ls \ka^{-1}  \|\f\|\|\w^0\| +\ka h^2 \|\f\|\|\w^0\| +\ka^{-1}h\|\f\|\| \Eh\|            \notag \\ 
				&\phantom{{}+} +  \ka^{-1} \|\g\|_{\Ga} \|\w^0\| 
				+\ka h^{\frac{3}{2}}\|\g\|_{\Ga} \|\w^0\| +\ka^{-1}  h^{\frac{1}{2}}\|\g\|_{\Ga}  \| \Eh\|           \notag \\ 
				&\phantom{{}+} +  \ka^3 h^2\|\E_h\|\|\w^0\| +\ka h \|\E_h\|^2       \notag \\ 
				&\ls  \ka^{-1}\big(\|\f\|+\|\g\|_{\Ga}\big)\|\w^0\|+ \ka^3 h^2\|\E_h\| \|\w^0\|             \notag \\ 
				&\phantom{{}+} +   \ka^{-1}h^\frac12\big(\|\f\|+  \|\g\|_{\Ga}\big)\|\E_h\| + \ka h \|\E_h\|^2.
				\label{eq:stabehw}
			\end{align}
			Note that $(\w^0,\nabla r_h^0)=0$. We have
			\eqn{\|\w^0\|^2 & =(\w^0+\Eh-\Eh +\w_{h}^0 +\nabla r_h^0 -\w_{h}^0, \w^0) \notag \\
				& = (\w^0-\w_{h}^0, \w^0)  +(\Eh^0-\Eh, \w^0) +(\Eh, \w^0),      }
			which together with \eqref{eq:stabehw} implies that
			\eqn{
				\|\w^0\|^2&\ls\|\w^0-\w_{h}^0\|^2+ \|\Eh^0-\Eh\|^2+\ka^{-1}\big(\|\f\|+\|\g\|_{\Ga}\big)\|\w^0\|
				+ \ka^3 h^2\|\E_h\| \|\w^0\|    \notag \\ 
				&\phantom{{}+}+   \ka^{-1}h^\frac12\big(\|\f\|+  \|\g\|_{\Ga}\big)\|\E_h\| + \ka h \|\E_h\|^2.  }
			Therefore, by the Young's inequality we have
			\eq{\label{stabw0wEstimate}
				\|\w^0\|&\ls\|\w^0-\w_{h}^0\|+ \|\Eh^0-\Eh\|+\ka^{-1}\big(\|\f\|+\|\g\|_{\Ga}\big)+ \big(\ka^3 h^2+(\ka h)^\frac12\big)\|\E_h\|.}

			{\it Step 4.  Summing up.} By plugging \eqref{stabw0wEstimate} into \eqref{eq:stabEhbound} and 
			using \eqref{eq:ehcerr} and \eqref{eq:stabwerr}, we obtain
			\eqn{
				\|\E_h\| &\ls h^{\frac{1}{2}}\|\Eht\|_{\Ga}+h\|\curl\Eh\|+ \ka^{-1}\big(\|\f\|+\|\g\|_{\Ga}\big)
				+\big(\ka^3 h^2+(\ka h)^\frac12\big)\|\E_h\|, }
			which together with \eqref{eq:stabbd}--\eqref{eq:stabEner} gives
			\begin{align*}
				\|\E_h\| & \ls \big(\ka^{-1}+\ka^{-1}h^\frac12+k^{-\frac12}h\big)\big(\|\f\|+\|\g\|_{\Ga}\big)
				+ \big(\ka^3 h^2+(\ka h)^\frac12\big)\|\E_h\|.
			\end{align*} 
			Therefore, there exists a constant $C_0>0$  such that if  $\ka^3 h^2\le C_0$,  then
			\begin{align}
				\|\E_h\| \ls \ka^{-1}\big(\|\f\|+\|\g\|_{\Ga}\big).
				\label{eq:stabl2}
			\end{align} 
			Then \eqref{eq:EEMStability} follows from \eqref{eq:stabl2}, \eqref{eq:stabbd} and \eqref{eq:stabEner}. 
			This completes the proof of the theorem.	
		\end{proof}
		
		\begin{remark}\label{rem:EEMwellposed}
			{\rm (a)}{ This stability  estimate \eqref{eq:EEMStability}  of the  EE  solution is  of the same  order as that 
				of the continuous solution (cf. \eqref{eq:stability0} in Lemma~\ref{thm:stability} )  and still holds when $\dive\f\neq 0$}. 
			
			{\rm (b)} {The modified duality argument used in Step 3 differs from the standard one only by replacing 
				the interpolation of the solution ($\ps$) to the dual problem used there by its elliptic projection 
				($\pps$). If the standard duality argument is used instead, only asymptotic stability estimate under
				the mesh condition $\ka^2 h\le C_0$ can be obtained. The modified duality argument was first used to 
				derive preasymptotic error estimates \cite{ZhuWu2013II,DuWu2015} and preasymptotic stability estimates 
				\cite{Wu2018jssx} for FEM and CIP-FEM  for the Helmholtz equation with large wave number.	}
		\end{remark}

		\subsection{Error estimates of EEM}

		The following theorem gives the main result of this paper.
		
		\begin{theorem}\label{thm:L2result}  
			Assume that $\Omega \subset \mathbb{R}^{3}$ is a bounded $C^{2}$-domain and strictly star-shaped with respect to  
			a point ${\bm x}_{0} \in \Om$ and that $\dive\f=0$ in $\Om$ and $\g\cdot\n=0$ on $\Ga$. Let $\E$ be the solution 
			to the problem \eqref{eq:eq}--\eqref{eq:bc} and $\E_h$ be the EEM solution to \eqref{eq:EEM}.
			Then there exists a constant $C_0>0$ independent 
			of $\ka$ and $h$ such that when  {$\ka^3 h^2\le C_0$},  
			the following preasymptotic error estimates hold:
			\eq{\norme{\E-\Eh} &\ls   \big(\ka h+ \ka^3 h^2\big) \Mfg,	\label{eq:energyResult}\\
				\ka\|\E-\Eh\|  &\ls \big( {(\ka h)^2 + \ka^3 h^2}\big) \Mfg. \label{eq:L2result}
			}
		\end{theorem}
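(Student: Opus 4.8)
The plan is to reduce both estimates to the stability bound of Theorem~\ref{thm:EEMStability} together with the projection and regularity estimates of Lemmas~\ref{lem:Pherror} and~\ref{thm:stability}, splitting the error through the elliptic projection as $\E-\Eh=(\E-\PEp)+\ph$, where $\ph:=\PEp-\Eh\in\Vh$.

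\emph{Energy estimate.} First I would identify the discrete problem solved by $\ph$. The Galerkin orthogonality $a(\E-\Eh,\bv_h)=0$ for all $\bv_h\in\Vh$ (subtract \eqref{eq:EEM} from \eqref{eq:dvp}, i.e.\ \eqref{eq:orthogonality} with $\ga_\fa\equiv0$) combines with the defining relation \eqref{eq:proj} of $\PEp$ and the identity \eqref{eq:hat a}, which give $a(\E-\PEp,\bv_h)=-2\ka^2(\E-\PEp,\bv_h)$, to show that $\ph$ is itself an EEM solution with data $2\ka^2(\E-\PEp)$ and $\bm 0$:
\[
a(\ph,\bv_h)=2\ka^2(\E-\PEp,\bv_h)\qquad\forall\,\bv_h\in\Vh.
\]
Theorem~\ref{thm:EEMStability} then yields $\norme{\ph}\ls\ka^2\|\E-\PEp\|$. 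Feeding in the $\bL^2$ projection bound \eqref{eq:Pherror_l2} and the regularity \eqref{eq:H2stab} (note $\ka^3h^2\le C_0$ forces $\ka h\ls1$, so $C_{\ka h}\ls1$) gives $\|\E-\PEp\|\ls h^2\ka\Mfg$, whence $\norme{\ph}\ls\ka^3h^2\Mfg$. On the other hand, \eqref{eq:Pherror_energy} with \eqref{eq:stability1} and \eqref{eq:H2stab} (using $\|\g\|_{\frac12,\Ga}\le\ka\Mfg$) gives $\norme{\E-\PEp}\ls h|\curl\E|_1+(\ka h)^{\frac12}h|\E|_2\ls\ka h\Mfg$. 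The triangle inequality then produces \eqref{eq:energyResult}.

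\emph{$\bL^2$ estimate.} For \eqref{eq:L2result} I would run an Aubin--Nitsche duality argument built on the adjoint projection $\pps$. Let $\ps$ solve the adjoint problem with source $\E-\Eh$, so that $a(\bv,\ps)=(\bv,\E-\Eh)$ for all $\bv\in\V$; testing with $\bv=\E-\Eh$ gives $\|\E-\Eh\|^2=a(\E-\Eh,\ps)$, and Galerkin orthogonality allows subtracting $\pps$. Splitting $\E-\Eh=(\E-\PEp)+\ph$ and using the adjoint relation $a(\ph,\ps-\pps)=-2\ka^2(\ph,\ps-\pps)$ coming from \eqref{eq:proj}, I obtain
\[
\|\E-\Eh\|^2=a(\E-\PEp,\ps-\pps)-2\ka^2(\ph,\ps-\pps).
\]
Bounding the first term by $\norme{\E-\PEp}\,\norme{\ps-\pps}$ and the second by $2\ka^2\|\ph\|\,\|\ps-\pps\|$, then inserting the projection errors of Lemma~\ref{lem:Pherror} for $\ps$ and the dual regularity $|\ps|_2,|\curl\ps|_1\ls\ka\|\E-\Eh\|$, every contribution is $\ls(\ka h)^2\Mfg\,\|\E-\Eh\|$ --- the second after absorbing the factor $\ka^5h^4\ls\ka^2h^2$ via $\ka^3h^2\le C_0$. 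Dividing by $\|\E-\Eh\|$ gives $\|\E-\Eh\|\ls(\ka h)^2\Mfg$, and multiplying by $\ka$ yields \eqref{eq:L2result}.

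\emph{Main obstacle.} The delicate point is supplying the wave-number-explicit $\bH^2$ regularity $\|\ps\|_2\ls\ka\|\E-\Eh\|$ for the dual solution: its source $\E-\Eh$ is only in $\bL^2$ and, since $\dive\E=0$ but $\Eh$ is not solenoidal, it is not divergence free, so neither \eqref{eq:H2stab} nor the $\Mfg$ term $\ka^{-2}\|\f\cdot\n\|_{\frac12,\Ga}$ applies verbatim on the merely $C^2$ domain. I would resolve this as in the stability proof and Remark~\ref{rem3.1}(c): decompose $\Eh$ by the discrete Helmholtz decomposition of Lemma~\ref{lem:HD}, observe (as in \eqref{eq:stabEhrh0}) that its gradient part is of lower order $\bO(\ka^{-2}\|\f\|)$ because testing the EEM against discrete gradients kills the curl term, and run the duality on the resulting (almost) divergence-free field whose associated continuous potential does enjoy the $\bH^1$ regularity needed to invoke \eqref{eq:H2stab}. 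The remaining difficulty is purely bookkeeping: verifying that every product of projection and regularity factors is either already of the target orders $(\ka h)^2$ and $\ka^3h^2$ or can be absorbed through $\ka^3h^2\le C_0$ and $\ka h\ls1$; this accounting, rather than any single inequality, is where the argument is most fragile.
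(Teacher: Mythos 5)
Your energy-norm argument is exactly the paper's proof: split $\E-\Eh$ through the elliptic projection, observe that the discrete part solves the EEM with data $2\ka^2(\E-\PEp)$ and zero boundary data, invoke Theorem~\ref{thm:EEMStability}, and conclude by the triangle inequality together with Lemmas~\ref{thm:stability} and~\ref{lem:Pherror}. That half is correct and complete.

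For the $\bL^2$ estimate, however, you have overlooked that you already hold everything you need. The stability bound \eqref{eq:EEMStability} controls $\ka\|\E_h\|$ as well as $\norme{\E_h}$, so the very same application of Theorem~\ref{thm:EEMStability} to $\ph$ gives $\ka\|\ph\|\ls\ka^2\|\E-\PEp\|\ls\ka^3h^2\Mfg$ at no extra cost, and since $\ka\|\E-\PEp\|\ls\ka\cdot\ka h^2\Mfg=(\ka h)^2\Mfg$ by \eqref{eq:Pherror_l2} and \eqref{eq:H2stab}, the triangle inequality yields \eqref{eq:L2result} immediately. This is precisely what the paper does: one application of the stability theorem delivers both norms simultaneously. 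Your proposed Aubin--Nitsche argument is therefore not only unnecessary but also genuinely problematic as written: the dual source $\E-\Eh$ is not solenoidal, and by Remark~\ref{rem3.1}(c) the wave-number-explicit $\bH^2$ bound for a non-divergence-free right-hand side is only available on a $C^3$ domain, whereas the theorem assumes only $C^2$. The repair you sketch --- Helmholtz-decomposing $\Eh$ and running the duality on the divergence-free part --- amounts to re-deriving the proof of Theorem~\ref{thm:EEMStability} inside the error analysis, and you leave its bookkeeping unverified. Delete the duality step and read the $\bL^2$ bound off the stability theorem instead; the ``main obstacle'' you identify then simply does not arise.
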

		\begin{proof}
			Denote by $\bbeta =\E-\PEp, {\bxi}_h=\Eh - \PEp$ , then $ \E-\Eh =\bbeta-{\bxi}_h $ and 
			\begin{align*} 
				a ({\bxi}_h,\bv_{h}) &= a (\bbeta, \bv_{h})  = -2 \ka^2 (\bbeta, \bv_{h})\quad {\forall \bv_h\in\V_h.}
			\end{align*} 
			{From Lemmas~\ref{thm:stability}--\ref{lem:Pherror}, we have}
			\begin{equation*}
				\| \bbeta\|  \ls h^2 \|\E\|_2   \ls \ka h^2 \Mfg,\quad \norme{\bbeta} \ls \ka h \Mfg.
			\end{equation*}
			By using Theorem~ \ref{thm:EEMStability}  {(with $\bm f=-2\ka^2\bbeta$ and $\g=0$)}, we obtain
			\begin{align*} 
				\norme{\bxi_h}+ \ka\|{\bxi}_h\|  & \ls \ka^2 \| \bbeta\|    \ls \ka^3 h^2 \Mfg.  
			\end{align*} 	 
			Combining the above two estimates and using the triangle  {inequality completes the proof of the theorem.}
		\end{proof}	
		
		{\begin{remark}\label{rem:EEM-error-estimates}
				{\rm (a)}  {\it A priori} error estimates of the  EEM for the time-harmonic Maxwell equations with  impedance 
				boundary conditions have been provided in \cite{monk2003,gatica2012finite}.
				However, explicit dependence on the wave number $\ka$ is not discussed in either study. 
				
				{\rm (b)} When $\dive\f\neq 0$ and $\Om$ is $C^3$, the estimates \eqref{eq:energyResult}--\eqref{eq:L2result} still 
				hold if an additional term $\ka^{-3}\|\dive\f\|_1$ is added to $\Mfg$ (see Remark~\ref{rem3.1}(c)). 
				
				{\rm (c)} To the best of the authors' knowledge, this theorem gives the first preasymptotic error 
				estimates in both $\bH(\curl)$ and $\bL^2$ norms for the second type N\'{e}d\'{e}lec linear EEM  
				for Maxwell's equations, under the mesh condition that $k^3h^2$ is sufficiently small. 
				\cite{Melenk_2020,melenk2023wavenumber} proposed asymptotic error estimates for the first type
				N\'{e}d\'{e}lec $hp$-EEM for the Maxwell's equations with transparent and impedance boundary
				conditions, respectively, which say that the discrete solution is pollution-free if $p \gtrsim  \ln  \ka$
				and $\ka h/p$ is sufficiently small. However, the lowest order EEM was excluded in both works. 
				For the second order EEM, it requires that $k^8h$ is sufficiently small 
				(see \cite[Remark 4.19]{Melenk_2020}) for the case of transparent boundary condition or
				$k^2h$ is sufficiently small  for the case of impedance boundary condition. 
				
				{\rm (d)} 
				Our error estimation process is to prove the stability of the EEM first, and then use it to derive the error estimates. 
				We had tried to derive preasymptotic error estimates for the EEM mimicking the usual process (see, e.g., \cite{monk2003}),
				including using the modified duality argument to derive the error estimates directly,  but failed. 
		\end{remark}}
		
		\section{ Preasymptotic  error estimates for CIP-EEM }\label{sc5}

		In this section, we show that the  error estimates  \eqref{eq:energyResult}--\eqref{eq:L2result}
		also hold for CIP-EEM with general   {penalty} parameters, which can be proved by following the lines in Section \ref{sc4}.
		We omit the similarities and  point out only the differences.
		
		{We first introduce the energy space:
			\eqn{\hat\V:=\big\{\bv\in\V:\; (\curl\bv)|_K\in \bH^1(K) \quad \forall K\in\Th\big\},}
			and define the energy norm:}
		\begin{equation}
			\energy{\bv}:=\Big( \norme{\bv}^2+\sum_{\fa  \in  {\mathcal{F}}_{h}^{I}} |\gamma_{\fa } | h_{\fa }\|
			\jv{\curl  \bv }  \|_{0, \fa }^{2} \Big)^{\frac{1}{2}}.
			\label{eq:energyNormDefine}
		\end{equation}
		Similar to \eqref{eq:hat a}--\eqref{eq:proj}, we define
		\eq{\label{eq:hat a gamma}
			{\hat a}_{\gamma}({\bm u}, {\bm v}):= a_{\gamma}({\bm u}, {\bm v} )+2\ka^2({\bm u}, {\bm v})  \quad \forall {\bm u}, {\bm v} \in \hat\V, } 
		{and introduce} $\bH(\curl)$-elliptic projections ${\bm P}_h^\pm$ onto $\V_h$ as
		\begin{align}
			\hat{a}_{\gamma}\big({\bm u}-{\bm P}_h^+{\bm u},{\bm v}_h\big)=0, \quad
			\hat{a}_{\gamma}\big({\bm v}_h,{\bm u}-{\bm P}_h^-{\bm u}\big)=0  \quad \forall {\bm v}_h\in\Vh. \label{eq:CIPproj} 
		\end{align}
		
		The following lemma provides the continuity and coercivity of the sesquilinear form $\hat a_\ga$.  
		\begin{lemma}\label{lem:ahat}
			There exists a positive number $\alpha_0$ such that if $\re\gamma_{\fa } \geq -\alpha_0 $ 
			and $\im\gamma_{\fa }\leq 0$, then
			\begin{subequations}
				\begin{align}
					\big|\hat{a}_{\gamma} (\bv,\w ) \big|   
					&\ls \energy{\bv}\energy{\w}    \quad \forall \bv,\w\in  {\hat\V} , \; \label{eq:discontinuity}\\
					\re \hat{a}_{\gamma} (\bv_h,\bv_h ) -\im \hat{a}_{\gamma} (\bv_h,\bv_h ) 
					&\gtrsim \energy{\bv_h}^2   \quad \forall \bv_h \in \V_h.   \; \label{eq:discoercivity} 
				\end{align}
			\end{subequations} 	
		\end{lemma}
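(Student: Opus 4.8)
The plan is to prove the two bounds separately: the continuity \eqref{eq:discontinuity} is a routine application of the Cauchy--Schwarz inequality, while the coercivity \eqref{eq:discoercivity} is the substantive part and rests on two ingredients, an elementary scalar inequality for the penalty weights and a discrete inverse--trace inequality that lets the (possibly negative) real parts of $\gamma_\fa$ be absorbed. For \eqref{eq:discontinuity} I would expand, for $\bv,\w\in\hat\V$,
\[
\hat{a}_{\gamma}(\bv,\w)=(\curl\bv,\curl\w)+\ka^2(\bv,\w)-\ii\ka\la\bra\bv_T,\w_T\ket+\J(\bv,\w),
\]
and bound the four terms one at a time. The first three are controlled by Cauchy--Schwarz against the matching pieces $\norm{\curl\cdot}$, $\ka\norm{\cdot}$, and $(\ka\la)^{1/2}\norm{\cdot_T}_\Ga$ of $\norme{\cdot}$; for the penalty term I would apply Cauchy--Schwarz on each face and then the discrete Cauchy--Schwarz inequality over $\fa\in\mathcal{F}_h^I$ with weights $\abs{\gamma_\fa}h_\fa$, which reproduces exactly the penalty part of $\energy{\cdot}$ in \eqref{eq:energyNormDefine}. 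Summing yields $\abs{\hat{a}_{\gamma}(\bv,\w)}\ls\energy{\bv}\,\energy{\w}$; note this uses no sign restriction on $\gamma_\fa$ and holds on all of $\hat\V$.

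For \eqref{eq:discoercivity} I would take $\w=\bv_h$ and separate real and imaginary parts. Since $\norm{\curl\bv_h}^2$, $\ka^2\norm{\bv_h}^2$, and $\ka\la\norm{\bv_{h,T}}_\Ga^2$ are real and nonnegative, writing $\gamma_\fa=\re\gamma_\fa+\ii\im\gamma_\fa$ gives
\[
\re\hat{a}_{\gamma}(\bv_h,\bv_h)-\im\hat{a}_{\gamma}(\bv_h,\bv_h)=\norme{\bv_h}^2+\sum_{\fa\in\mathcal{F}_h^I}\big(\re\gamma_\fa-\im\gamma_\fa\big)h_\fa\norm{\jv{\curl\bv_h}}_{0,\fa}^2,
\]
the $\re-\im$ combination being chosen precisely so that the impedance boundary term contributes $+\ka\la\norm{\bv_{h,T}}_\Ga^2$ and reassembles $\norme{\bv_h}^2$. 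It then remains to show the penalty sum on the right is at least a fixed fraction of $\sum_\fa\abs{\gamma_\fa}h_\fa\norm{\jv{\curl\bv_h}}_{0,\fa}^2$, up to a term absorbable into $\norme{\bv_h}^2$.

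Two ingredients finish this. First, the elementary inequality $\re\gamma_\fa-\im\gamma_\fa\ge\abs{\gamma_\fa}-2\alpha_0$, valid whenever $\re\gamma_\fa\ge-\alpha_0$ and $\im\gamma_\fa\le0$ (check the cases $\re\gamma_\fa\ge0$ and $-\alpha_0\le\re\gamma_\fa<0$ separately, using $\sqrt{x^2+y^2}\le\abs{x}+\abs{y}$ in the latter). Second, the discrete inverse--trace inequality
\[
\sum_{\fa\in\mathcal{F}_h^I}h_\fa\norm{\jv{\curl\bv_h}}_{0,\fa}^2\ls\norm{\curl\bv_h}^2\le\norme{\bv_h}^2\qquad\forall\bv_h\in\V_h,
\]
obtained facewise from $\abs{\curl\bv_h\times\bn}\le\abs{\curl\bv_h}$ together with $\norm{\curl\bv_h}_{0,\fa}^2\ls h_\fa^{-1}\norm{\curl\bv_h}_{0,K}^2$, summed with finite overlap. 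Combining the two, the penalty sum is bounded below by $\sum_\fa\abs{\gamma_\fa}h_\fa\norm{\jv{\curl\bv_h}}_{0,\fa}^2-C\alpha_0\norme{\bv_h}^2$, so the right-hand side above is at least $(1-C\alpha_0)\norme{\bv_h}^2+\sum_\fa\abs{\gamma_\fa}h_\fa\norm{\jv{\curl\bv_h}}_{0,\fa}^2$; choosing $\alpha_0$ small enough that $C\alpha_0\le\tfrac12$ yields $\gtrsim\energy{\bv_h}^2$, as desired.

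The main obstacle is the discrete inverse--trace inequality, which is where the restriction to $\V_h$ (rather than $\hat\V$) is essential: it relies on $\curl\bv_h$ lying, elementwise, in a fixed finite-dimensional space, and therefore requires care on the curved elements, where $F_K$ is only a $C^1$-diffeomorphism. I would handle this by pulling back to the reference tetrahedron, using norm equivalence on the fixed finite-dimensional image space there, and transferring with constants depending only on the shape-regularity constant $\sigma$; this is the only place the quasi-uniformity and element-map hypotheses enter.
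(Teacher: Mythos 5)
Your proof is correct and follows essentially the same route as the paper's: Cauchy--Schwarz for the continuity, and for the coercivity the key ingredient in both is the discrete trace/inverse inequality $\sum_{\fa}h_\fa\|\jv{\curl\bv_h}\|_{0,\fa}^2\ls\|\curl\bv_h\|^2$, which lets the negative real parts of $\gamma_\fa$ be absorbed for $\alpha_0$ small. Your elementary inequality $\re\gamma_\fa-\im\gamma_\fa\ge|\gamma_\fa|-2\alpha_0$ is a minor refinement that recovers the jump part of $\energy{\cdot}$ with a constant independent of $\max_\fa|\gamma_\fa|$, a step the paper leaves implicit when passing from its bounds \eqref{eq:rehaCoer}--\eqref{eq:imhaCor} to \eqref{eq:discoercivity}.
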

		\begin{proof}
			{Since the proof of \eqref{eq:discontinuity} is straightforward} by using the Cauchy-Schwarz inequality, we only prove
			\eqref{eq:discoercivity}.
			First,  {for any $\bv_h \in \V_h$ we have}
			\eqn{
				\re \hat{a}_{\gamma} (\bv_h,\bv_h ) = \|\curl \bv_h \|^2 +\ka^2 \|\bv_h\|^2 
				+\sum_{\fa  \in\mathcal{F}_h^I} \re(\ga_{\fa })	h_\fa 
				\bra \jv{\curl \bv_h},\jv{\curl\bv_h} \ket_{\fa }.}
			By using the  {local} trace inequality on each element and the inverse inequality, there exists 
			$\beta_0 >0$ such that
			\eqn{ \sum_{\fa  \in\mathcal{F}_h^I}  	h_\fa 
				\bra \jv{\curl \bv_h},\jv{\curl\bv_h} \ket_{\fa }
				\leq \beta_0 \|\curl \bv_h \|^2.}
			Thus, by taking $\alpha_0 = \frac{1}{2\beta_0}$  and letting $\re\gamma_{\fa } \geq -\alpha_0 $, we conclude that
			\begin{equation}
				\re \hat{a}_{\gamma}(\bv_h,\bv_h ) \geq \frac{1}{2} \|\curl \bv_h\|^2 + \ka^2\|\bv_h\|^2.
				\label{eq:rehaCoer}
			\end{equation}
			Secondly, it is obvious that 
			\begin{equation}
				- \im	\hat{a}_{\gamma} (\bv_h,\bv_h ) \geq  \ka \la \|\bv_{h,T}\|_{\Ga}^2.
				\label{eq:imhaCor}
			\end{equation}
			Combining \eqref{eq:rehaCoer} and \eqref{eq:imhaCor}, we obtain \eqref{eq:discoercivity}. This completes the proof of the lemma.
		\end{proof}
		
		The following lemma gives error estimates for the $\bH(\curl)$-elliptic projections.
		\begin{lemma}\label{lem:PEperror} Let $\gamma:= \max_{\fa  \in\mathcal{F}_h^I}  |\gamma_{\fa }|$. Suppose  $\bm u\in\bH^2(\Om)$ and $\gamma_{\fa }$  satisfies $\re\gamma_{\fa } \geq -\alpha_0 $ and	$\im\gamma_{\fa }\leq 0$. Then
			\begin{subequations}
				\begin{align}
					\energy{ {\bm u}-\boldsymbol{P}^{\pm}_h{\bm u}}&\ls \inf_{{\bm u}_h\in\V_h}\energy{{\bm u}-{\bm u}_h}
					\ls (1+\ga)^{\frac{1}{2}}h  |{\curl \bm u}|_{1}  +C_{\ka h}(\ka h)^{\frac{1}{2}}h |{\bm u}|_{2},\label{eq:PEperror_energy}\\
					\|{\bm u}-\boldsymbol{P}^{\pm}_h{\bm u}\|    &\ls C_{\ka h}(1+\ga) h^{2 } |{\bm u}|_{2}, \label{eq:PEperror_l2} \\
					\|({\bm u}-\boldsymbol{P}^{\pm}_h{\bm u})_T\|_{\Ga} & \ls C_{ \ka h}(1+\ga)h^{\frac{3}{2}}  |{\bm u}|_{2}\label{eq:boundPEerror}.
				\end{align}
			\end{subequations} 	
		\end{lemma}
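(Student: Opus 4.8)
The plan is to establish the three bounds in the order \eqref{eq:PEperror_energy}, \eqref{eq:PEperror_l2}, \eqref{eq:boundPEerror}, following the penalty-free argument for Lemma~\ref{lem:Pherror} but carrying the extra face contribution in $\energy{\cdot}$ throughout. The first inequality in \eqref{eq:PEperror_energy} is Céa's lemma for $\hat a_\gamma$: since $\hat a_\gamma(\bm u-\boldsymbol{P}_h^+\bm u,\bm v_h)=0$ for all $\bm v_h\in\Vh$ by \eqref{eq:CIPproj}, the coercivity \eqref{eq:discoercivity} and continuity \eqref{eq:discontinuity} of Lemma~\ref{lem:ahat} give, for any $\bm u_h\in\Vh$,
\[
\energy{\boldsymbol{P}_h^+\bm u-\bm u_h}^2\ls\big|\hat a_\gamma(\bm u-\bm u_h,\,\boldsymbol{P}_h^+\bm u-\bm u_h)\big|\ls\energy{\bm u-\bm u_h}\,\energy{\boldsymbol{P}_h^+\bm u-\bm u_h},
\]
so that $\energy{\bm u-\boldsymbol{P}_h^+\bm u}\ls\inf_{\bm u_h\in\Vh}\energy{\bm u-\bm u_h}$ after a triangle inequality; the estimate for $\boldsymbol{P}_h^-$ is identical using the second identity in \eqref{eq:CIPproj}.

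To bound the infimum I would take $\bm u_h=\bm{r}_h\bm u$, the canonical second-type N\'ed\'elec interpolant, and split $\energy{\bm u-\bm{r}_h\bm u}^2=\norme{\bm u-\bm{r}_h\bm u}^2+\sum_{\fa\in\mathcal{F}_h^I}|\gamma_\fa|h_\fa\|\jv{\curl(\bm u-\bm{r}_h\bm u)}\|_{0,\fa}^2$. The first term is already controlled by the right-hand side of \eqref{eq:Pherror_energy}. The key observation for the face term is that $\bm u\in\bH^2(\Om)$ implies $\curl\bm u\in\bH^1(\Om)$, so its tangential trace is single-valued and $\jv{\curl\bm u}=\bm 0$; hence $\jv{\curl(\bm u-\bm{r}_h\bm u)}=-\jv{\curl\bm{r}_h\bm u}$ and, applying the elementwise scaled trace inequality to $\curl(\bm u-\bm{r}_h\bm u)$ on the two elements sharing $\fa$ together with the interpolation bounds $\|\curl(\bm u-\bm{r}_h\bm u)\|_{0,K}\ls h_K|\curl\bm u|_{1,K}$ and $|\curl(\bm u-\bm{r}_h\bm u)|_{1,K}\ls|\curl\bm u|_{1,K}$, one gets $\|\jv{\curl(\bm u-\bm{r}_h\bm u)}\|_{0,\fa}^2\ls h\,(|\curl\bm u|_{1,K^-}^2+|\curl\bm u|_{1,K^+}^2)$. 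Summing over interior faces with $h_\fa\ls h$ yields $\sum_\fa|\gamma_\fa|h_\fa\|\jv{\curl(\bm u-\bm{r}_h\bm u)}\|_{0,\fa}^2\ls\gamma h^2|\curl\bm u|_1^2$, which produces the extra $(1+\gamma)^{1/2}h|\curl\bm u|_1$ contribution and completes \eqref{eq:PEperror_energy}.

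For \eqref{eq:PEperror_l2} I would run an Aubin--Nitsche duality argument against the continuous (penalty-free) form. Writing $\bm e=\bm u-\boldsymbol{P}_h^+\bm u$, let $\bm z\in\V$ solve the dual problem $\hat a(\bm v,\bm z)=(\bm v,\bm e)$ for all $\bm v\in\V$. This problem is coercive in $\norme{\cdot}$ (indeed $\re\hat a(\bm v,\bm v)-\im\hat a(\bm v,\bm v)=\norme{\bm v}^2$), hence well posed, and elliptic regularity gives $\bm z\in\bH^2(\Om)$ with $\|\bm z\|_2$ bounded by $\|\bm e\|$ up to a $\ka$-dependent factor. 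Because $\bm z\in\bH^2(\Om)$ we again have $\jv{\curl\bm z}=\bm 0$, so $\J(\cdot,\bm z)=0$ and therefore $\hat a(\bm v,\bm z)=\hat a_\gamma(\bm v,\bm z)$ on $\hat\V$. Taking $\bm v=\bm e$ and using the Galerkin orthogonality $\hat a_\gamma(\bm e,\bm v_h)=0$,
\[
\|\bm e\|^2=(\bm e,\bm e)=\hat a(\bm e,\bm z)=\hat a_\gamma(\bm e,\bm z)=\hat a_\gamma(\bm e,\bm z-\boldsymbol{P}_h^-\bm z)\ls\energy{\bm e}\,\energy{\bm z-\boldsymbol{P}_h^-\bm z}.
\]
Applying \eqref{eq:PEperror_energy} to both $\bm e$ and $\bm z$, using $|\curl\cdot|_1\ls|\cdot|_2$ and the dual regularity to trade one power of $\|\bm e\|$, the two factors $(1+\gamma)^{1/2}$ multiply to $(1+\gamma)$ and the powers of $\ka$ combine into $C_{\ka h}$, which yields \eqref{eq:PEperror_l2}; the case $\boldsymbol{P}_h^-$ uses the adjoint dual problem.

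Finally, for \eqref{eq:boundPEerror} I would split $\bm u-\boldsymbol{P}_h^+\bm u=(\bm u-\bm{r}_h\bm u)+(\bm{r}_h\bm u-\boldsymbol{P}_h^+\bm u)$. The interpolation part obeys $\|(\bm u-\bm{r}_h\bm u)_T\|_\Ga\ls\|\bm u-\bm{r}_h\bm u\|_{0,\Ga}\ls h^{3/2}|\bm u|_2$ by the scaled trace inequality combined with the $\bL^2$ and $\bH^1$ interpolation estimates on boundary elements. The discrete part lies in $\V_h$, so the inverse trace inequality $\|\bm v_{h,T}\|_\Ga\ls h^{-1/2}\|\bm v_h\|$ together with $\|\bm{r}_h\bm u-\boldsymbol{P}_h^+\bm u\|\ls\|\bm u-\bm{r}_h\bm u\|+\|\bm u-\boldsymbol{P}_h^+\bm u\|\ls C_{\ka h}(1+\gamma)h^2|\bm u|_2$ (using \eqref{eq:PEperror_l2}) gives $\|(\bm{r}_h\bm u-\boldsymbol{P}_h^+\bm u)_T\|_\Ga\ls C_{\ka h}(1+\gamma)h^{3/2}|\bm u|_2$, and \eqref{eq:boundPEerror} follows. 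I expect the main obstacle to be the $\bL^2$ step: one must verify that the dual solution's penalty vanishes (so that $\hat a_\gamma$ may legitimately replace $\hat a$) and, above all, keep precise track of the $\ka$-powers coming from the dual $\bH^2$ regularity so that they assemble exactly into the factor $C_{\ka h}$, which is harmless only in the regime $\ka h\ls1$. The face-term bookkeeping that generates the sharp $(1+\gamma)^{1/2}$ and $(1+\gamma)$ weights is the other place where care is needed.
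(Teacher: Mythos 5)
Your energy-norm argument is sound and is essentially the paper's (the paper itself only cites \cite{pplu2019} and says to replace its coercivity lemma by Lemma~\ref{lem:ahat}, but the underlying mechanism is exactly your C\'ea estimate, and your treatment of the face term via $\jv{\curl\bm u}=\bm 0$ for $\bm u\in\bH^2(\Om)$ plus the scaled trace inequality is the correct way to produce the $(1+\ga)^{\frac12}h|\curl\bm u|_1$ contribution). The trace estimate \eqref{eq:boundPEerror} via the inverse trace inequality would also be fine \emph{if} the $\bL^2$ bound were available.

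The genuine gap is in your proof of \eqref{eq:PEperror_l2}. You pose the dual problem $\hat a(\bv,\z)=(\bv,\bm e)$ with $\bm e=\bm u-\boldsymbol{P}_h^{+}\bm u$ and assert that elliptic regularity gives $\|\z\|_2\ls\|\bm e\|$. This fails: taking the divergence of $\curl\curl\z+\ka^2\z=\bm e$ gives $\dive\z=\ka^{-2}\dive\bm e$ in the sense of distributions, and $\dive\bm e$ is \emph{not} in $H^1(\Om)$ (it is not even in $L^2$, since the normal component of the edge-element function $\boldsymbol{P}_h^{+}\bm u$ jumps across interelement faces, so $\dive\boldsymbol{P}_h^{+}\bm u$ contains surface distributions). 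Hence $\z\notin\bH^2(\Om)$ and the step $\hat a_\ga(\bm e,\z-\boldsymbol{P}_h^{-}\z)\ls\energy{\bm e}\,\energy{\z-\boldsymbol{P}_h^{-}\z}\ls C_{\ka h}(1+\ga)^{\frac12}h\|\z\|_2\cdot(\cdots)$ has no justification. This is precisely the obstruction that forces the paper's route (see the appendix proof of Lemma~\ref{lem:Pherror}, which Lemma~\ref{lem:PEperror} generalizes): one first performs a (discrete) Helmholtz decomposition of the \emph{discrete} error $\boldsymbol{P}_h^{+}\bm u-\bv_h=\na r_h+\w_h$, uses the orthogonality $\hat a_\ga(\bm u-\boldsymbol{P}_h^{+}\bm u,\na\phi_h)=0$ to eliminate the gradient part, and applies the duality argument only with the divergence-free field $\w$ (respectively $\w^0$) as right-hand side, for which $\dive\w=0$ and the tangential/normal boundary conditions make the $\bH^2$ regularity estimate of Lemma~\ref{thm:stability} applicable. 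A secondary structural consequence is that the order of your last two steps must be reversed: in the paper the boundary-trace bound (Lemma~A.2) is established first and is then an ingredient in the $\bL^2$ bound (Lemma~A.3), whereas you derive the trace bound from the $\bL^2$ bound, which you have not validly obtained.
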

		\begin{proof}
			This lemma has been essentially demonstrated  in \cite[Theorem~4.3, Theorem~4.4]{pplu2019}. 
			To establish the proof of this lemma, it is sufficient to substitute  {\cite[Lemma~4.1]{pplu2019} with the above} Lemma~\ref{lem:ahat}.
		\end{proof}
		
		Using Lemmas \ref{lem:ahat} and \ref{lem:PEperror}, following the same lines of the proof in 
		Theorem~\ref{thm:EEMStability}, we obtain the following stability estimate for CIP-EEM.
		\begin{theorem}\label{thm:CIPStability}
			Let $\E_h$ be the solution to \eqref{eq:disvp}, $\f\in \bL^2(\Om)$, $\g\in\bL^2(\Ga)$. 
			There exist $C_0 >0$ , $\alpha_0>0$ such that
			if $\ka^3 h^2 \leq C_0$, $\re\gamma_{\fa } \geq -\alpha_0 $, 
			$\im\gamma_{\fa }\leq 0$, and $|\gamma_{\fa }|\ls 1$, then
			\begin{equation}
				\|\curl\E_h\| +\ka \|\E_h\|+\ka \|\E_{h,T}\|_{\Ga} {+\Big(\sum_{\fa  \in  {\mathcal{F}}_{h}^{I}} |\gamma_{\fa } | h_{\fa }\|
					\jv{\curl  \bv }  \|_{0, \fa }^{2} \Big)^{\frac12}} \ls \|\f\| +  \|\g\|_{\Ga}.
				\label{eq:CIPstability}
			\end{equation} 
			{And as a consequence, the CIP-EEM \eqref{eq:EEM} is well-posed.}
		\end{theorem}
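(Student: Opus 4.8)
The plan is to mirror the four-step structure of the proof of Theorem~\ref{thm:EEMStability}, noting at the outset that the dual problem \eqref{eq:stabDP1-1}--\eqref{eq:stabDP1-2}, its wave-number-explicit regularity estimates \eqref{eq:stabDP1}--\eqref{eq:stabDP2}, and the (discrete) Helmholtz decomposition of Lemma~\ref{lem:HD} are all properties of the \emph{continuous} problem and of the space $\V_h$ alone, hence are untouched by the penalty term $\J$. Consequently the only places where the argument must be altered are those that invoke the discrete equation \eqref{eq:disvp} or the elliptic projections, which are now governed by $a_\ga$ and $\hat a_\ga$ in place of $a$ and $\hat a$.

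For Step~1, rather than separating real and imaginary parts by hand I would invoke the coercivity estimate \eqref{eq:discoercivity} of Lemma~\ref{lem:ahat}. Taking $\bv_h=\Eh$ in \eqref{eq:disvp} and using $\hat a_\ga=a_\ga+2\ka^2(\cdot,\cdot)$ gives
\[
\energy{\Eh}^2 \ls \re\hat a_\ga(\Eh,\Eh)-\im\hat a_\ga(\Eh,\Eh)=2\ka^2\|\Eh\|^2+(\re-\im)\big((\f,\Eh)+\bra\g,\Eht\ket\big).
\]
Absorbing $\ka\la\|\Eht\|_\Ga^2$ into $\energy{\Eh}^2$ via Young's inequality yields the analogues of \eqref{eq:stabbd}--\eqref{eq:stabEner}, now with the penalty seminorm included on the left, namely $\energy{\Eh}\ls\ka\|\Eh\|+\ka^{-1/2}\big(\|\f\|+\|\g\|_\Ga\big)$.

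Step~2 is essentially unchanged: the decomposition \eqref{eq:Ehdecomp} is purely algebraic, and for the gradient term the identity $\curl\nabla r_h^0=0$ forces $\J(\Eh,\nabla r_h^0)=0$, so $a_\ga(\Eh,\nabla r_h^0)=a(\Eh,\nabla r_h^0)=(\f,\nabla r_h^0)$ exactly as before. The crux is Step~3. Because the dual solution $\ps$ lies in $\bH^2(\Om)$, its tangential curl-jumps $\jv{\curl\ps}$ vanish on every interior face, whence $\J(\Eh,\ps)=0$ and
\[
(\Eh,\w^0)=a(\Eh,\ps)=a_\ga(\Eh,\ps)=a_\ga(\Eh,\pps)+a_\ga(\Eh,\ps-\pps).
\]
The first term equals $(\f,\pps)+\bra\g,(\pps)_T\ket$ by \eqref{eq:disvp}, while the projection identity \eqref{eq:CIPproj} gives $\hat a_\ga(\Eh,\ps-\pps)=0$ and hence $a_\ga(\Eh,\ps-\pps)=-2\ka^2(\Eh,\ps-\pps)$. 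This reproduces verbatim the chain leading to \eqref{eq:stabehw}, the only difference being that the projection errors are now estimated by Lemma~\ref{lem:PEperror}; since its constants carry the harmless factor $(1+\ga)\ls1$ under the hypothesis $|\ga_\fa|\ls1$, every bound retains the same order in $\ka$ and $h$.

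Step~4 then proceeds identically, absorbing the $\big(\ka^3h^2+(\ka h)^{1/2}\big)\|\Eh\|$ contribution once $\ka^3h^2\le C_0$ (note $(\ka h)^{1/2}\ls C_0^{1/4}$ since $\ka\gtrsim1$), to conclude $\|\Eh\|\ls\ka^{-1}\big(\|\f\|+\|\g\|_\Ga\big)$ and hence \eqref{eq:CIPstability}, the penalty seminorm on the left-hand side being already controlled by Step~1. The main point to verify carefully --- and the only genuinely new ingredient --- is that $\J$ annihilates any function whose tangential curl is continuous across interior faces, i.e.\ that $\J(\Eh,\ps)=0$ and $\J(\Eh,\nabla r_h^0)=0$; this is precisely what allows the continuous dual problem and the Helmholtz decomposition, both built from $a$, to be inserted into an argument for the perturbed form $a_\ga$ without generating uncontrolled penalty remainders. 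One must also track that the sign conditions $\re\ga_\fa\ge-\alpha_0$ and $\im\ga_\fa\le0$ enter \emph{only} through Lemma~\ref{lem:ahat} and Lemma~\ref{lem:PEperror}, so that no restriction on $\ga_\fa$ beyond $|\ga_\fa|\ls1$ is required.
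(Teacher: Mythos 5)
Your proposal is correct and follows essentially the same route as the paper, which simply states that Theorem~\ref{thm:CIPStability} is obtained by rerunning the proof of Theorem~\ref{thm:EEMStability} with Lemma~\ref{lem:ahat} and Lemma~\ref{lem:PEperror} in place of their EEM counterparts. You have in fact supplied the details the paper leaves implicit --- in particular the observation that $\J(\cdot,\nabla r_h^0)=0$ and $\J(\cdot,\ps)=0$ because $\curl\nabla r_h^0=0$ and $\curl\ps\in\bH^1(\Om)$ has no tangential jumps --- and these are exactly the right points to check.
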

		
		\begin{remark}\label{rem:CIPwellposed}
			{\rm (a)} {This stability bound of the  CIP-EEM  solution is also of the same  order as 
				that of the continuous solution (cf. Lemma~\ref{thm:stability}) and holds when $\dive\f\neq 0$}.
			
			{\rm (b)}  { When $\im{\gamma_{\fa } } < 0 $, it is proved in \cite[Theorem~3.1]{pplu2019} 
				that the CIP-EEM  is absolutely stable (i.e. stable without any mesh constraint). Moreover, 
				by using a trick of  ``stability-error iterative improvement" developed in \cite{fw2011}, 
				it is proved in \cite[Theorem~4.6]{pplu2019} that the CIP-EEM satisfies the stability estimate
				$\energy{\E_h} \ls\Mfg $ under the conditions $\ka^3 h^2 \leq C_0$, $\re\gamma_{\fa }= 0 $, and
				$ -\im\gamma_{\fa } \simeq 1 $. Clearly, our result gives an improvement of that in \cite{pplu2019} 
				when  $\ka^3 h^2 \leq C_0$.  When $\im{\gamma_{\fa } } \geq 0 $ and $\ka^3 h^2$ is 
				large, the well-posedness of the   CIP-EEM (including EEM)    is still open.}	
		\end{remark}
		
		The following Theorem gives  {preasymptotic} error estimates for CIP-EEM, the proof of which is 
		{similar to that of Theorem~\ref{thm:L2result} and is omitted here.}
		
		\begin{theorem}\label{thm:CIPL2result} 
			Assume that $\Omega \subset \mathbb{R}^{3}$ is a bounded $C^{2}$-domain and strictly star-shaped with respect to  
			a point ${\bm x}_{0} \in \Om$ and that $\dive\f=0$ in $\Om$ and $\g\cdot\n=0$ on $\Ga$. Let $\E$ be the solution to the 
			problem \eqref{eq:eq}--\eqref{eq:bc} and $\E_h$ be the CIP-EEM solution to \eqref{eq:disvp}.
			Then there exist $C_0 >0$ , $\alpha_0>0$ such that
			when $\ka^3 h^2 \leq C_0$, $\re\gamma_{\fa } \geq -\alpha_0 $, 
			$\im\gamma_{\fa }\leq 0$, and $|\gamma_{\fa }|\ls 1$, 
			the following estimates hold:
			\eq{\energy{\E-\Eh} &\ls   \big(\ka h+ \ka^3 h^2\big) \Mfg,	\label{eq:CIPenergyResult}\\
				\ka\|\E-\Eh\|  &\ls \big((\ka h)^2 + \ka^3 h^2\big) \Mfg. \label{eq:CIPL2result}
			}
		\end{theorem}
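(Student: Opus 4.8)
The plan is to transcribe the proof of Theorem~\ref{thm:L2result}, replacing the form $a$ and its $\hat a$-projection by the penalized form $a_\ga$ and the $\hat a_\ga$-projection $\bP_h^\pm$ of \eqref{eq:hat a gamma}--\eqref{eq:CIPproj}, and the norm $\norme{\cdot}$ by $\energy{\cdot}$. Accordingly, I would set $\bbeta := \E - \PEp$ and $\bxi_h := \Eh - \PEp$, so that $\E - \Eh = \bbeta - \bxi_h$. The two contributions are then estimated separately: $\bbeta$ through the projection error bounds of Lemma~\ref{lem:PEperror} combined with the wave-number-explicit regularity of Lemma~\ref{thm:stability}, and $\bxi_h$ through the discrete stability estimate of Theorem~\ref{thm:CIPStability}.

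First I would realize $\bxi_h$ as a CIP-EEM solution with a controllably small datum. Because the method is consistent, i.e. $\J(\E,\cdot)=0$, the Galerkin orthogonality \eqref{eq:orthogonality} gives $a_\ga(\Eh,\bv_h)=a_\ga(\E,\bv_h)$, whence $a_\ga(\bxi_h,\bv_h)=a_\ga(\bbeta,\bv_h)$ for all $\bv_h\in\Vh$. Since $\bP_h^+$ is the $\hat a_\ga$-projection, $\hat a_\ga(\bbeta,\bv_h)=0$; recalling $\hat a_\ga=a_\ga+2\ka^2(\cdot,\cdot)$ this produces the identity
\eqn{a_\ga(\bxi_h,\bv_h)=-2\ka^2(\bbeta,\bv_h)\qquad\forall\,\bv_h\in\Vh.}
Thus $\bxi_h$ solves \eqref{eq:disvp} with source $\f=-2\ka^2\bbeta$ and $\g=\bm{0}$, and Theorem~\ref{thm:CIPStability} yields
\eqn{\energy{\bxi_h}+\ka\norm{\bxi_h}\ls\ka^2\norm{\bbeta}.}

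Next I would bound $\bbeta$. Since the hypotheses give $|\gamma_\fa|\ls 1$ (so $\ga\ls 1$) and $\ka^3h^2\le C_0$ forces $\ka h\ls 1$ (hence $C_{\ka h}\ls 1$), Lemma~\ref{lem:PEperror} simplifies to $\norm{\bbeta}\ls h^2|\E|_2$ and $\energy{\bbeta}\ls h|\curl\E|_1+(\ka h)^{1/2}h|\E|_2$. Inserting the regularity bounds $|\E|_2\le\norm{\E}_2\ls\ka\Mfg$ from \eqref{eq:H2stab} and $|\curl\E|_1\le\norm{\curl\E}_1\ls\ka\Mfg$ from \eqref{eq:stability1}, together with $(\ka h)^{1/2}\ls 1$, gives $\norm{\bbeta}\ls\ka h^2\Mfg$ and $\energy{\bbeta}\ls\ka h\Mfg$. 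Substituting $\norm{\bbeta}\ls\ka h^2\Mfg$ into the stability bound produces $\energy{\bxi_h}+\ka\norm{\bxi_h}\ls\ka^3h^2\Mfg$. Finally the triangle inequality yields $\energy{\E-\Eh}\le\energy{\bbeta}+\energy{\bxi_h}\ls(\ka h+\ka^3h^2)\Mfg$, which is \eqref{eq:CIPenergyResult}, and $\ka\norm{\E-\Eh}\le\ka\norm{\bbeta}+\ka\norm{\bxi_h}\ls((\ka h)^2+\ka^3h^2)\Mfg$, which is \eqref{eq:CIPL2result}.

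The argument itself is short once the three preliminary results are available, so the main obstacle is really packaged into Lemmas~\ref{lem:ahat}--\ref{lem:PEperror} and Theorem~\ref{thm:CIPStability}. Within the present proof the points that require care are: (i) the penalty contributions are carried uniformly inside $a_\ga$ and cancel correctly above only because consistency $\J(\E,\cdot)=0$ underlies \eqref{eq:orthogonality}; (ii) Theorem~\ref{thm:CIPStability} must be applied to the source $-2\ka^2\bbeta$, which is \emph{not} divergence-free since $\PEp$ is not solenoidal---this is legitimate precisely because the CIP stability estimate holds for an arbitrary $\bL^2$ source (cf. Remark~\ref{rem:CIPwellposed}(a)); and (iii) the factors $(1+\ga)$ appearing in Lemma~\ref{lem:PEperror} must be absorbed using the assumption $|\gamma_\fa|\ls 1$, so that the final constants remain independent of $\ka$ and $h$.
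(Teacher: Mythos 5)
Your proposal is correct and is exactly the argument the paper intends: the paper omits the proof of this theorem, stating only that it is "similar to that of Theorem~\ref{thm:L2result}", and your transcription (replacing $a$, $\hat a$, $\norme{\cdot}$ by $a_\ga$, $\hat a_\ga$, $\energy{\cdot}$ and invoking Lemma~\ref{lem:PEperror} and Theorem~\ref{thm:CIPStability} in place of their EEM counterparts) is precisely that adaptation. Your three "care points" — consistency of $\J$, the non-solenoidal source $-2\ka^2\bbeta$ being admissible in the stability theorem, and absorbing the $(1+\ga)$ factors via $|\gamma_\fa|\ls 1$ — are all valid and match the paper's remarks.
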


		\begin{remark}\label{rem:CIPerror}
			{\rm (a)} An important reason for studying the CIP-EEM is its potential to reduce the pollution 
			error by tuning the penalty parameters. 
			
			{\rm (b)} Remark~\ref{rem:EEM-error-estimates}(b) also holds for the CIP-EEM.
			
			{\rm (c)} The error estimate in the energy norm was also given in \cite[Theorem~4.6]{pplu2019} 
			under the conditions $\ka^3 h^2 \leq C_0$, $\re\gamma_{\fa }= 0 $, and $-\im\gamma_{\fa } \simeq 1 $.
			Our results relax the conditions on the penalty parameters and give a new $\bL^2$ error estimate 
			for the CIP-EEM. Such a relaxation  is meaningful in practical computations, since the penalty 
			parameters that can significantly reduce the pollution error are usually close to negative real numbers (see the next section).
		\end{remark}

		\section{Numerical example }\label{sc6}
		
		In this section, we report an example to verify the theoretical findings and to demonstrate the 
		performance of the EEM and the CIP-EEM,  in particular, the potential of the CIP-EEM to significantly
		reduce the pollution errors by selecting appropriate penalty parameters. 
		The  N\'{e}d\'{e}lec's linear edge elements of the  second type  are used in the numerical tests. 
		The linear systems resulted from edge element discretizations are solved by \texttt{pardiso}\cite{pardiso},
		which is commonly used to solve large, sparse systems of linear equations.
		
		\begin{example}\label{Ex1} We simulate the following three-dimensional time-harmonic Maxwell  problem:
			\begin{equation*}
				\begin{cases}
					\curl\curl\E-k^2\E  = \f, \quad &{\rm in}\quad \Om:={(1,2)}^3,\\
					\curl \E \times  \n -\ii\ka \E_T=\g,   \quad &{\rm on}\quad \Ga := \pa  \Omega,  \\
				\end{cases}
			\end{equation*}
			and $\f$ and $\g$ are chosen such that the
			exact solution is given by
			\eqn{\E= \ka \sum_{m=-1}^{1} h_1^{(1)}(\ka  r) \nabla_{S} Y_1^{m}\times \hat{\br} 
				+ \frac{1}{\ka}\big(  {\sin}(\ka z), \sin(\ka y), \sin(\ka x) \big),} 
			where $ h_1^{(1)}$ is the  spherical Hankel function of the first kind of  order $1$, $\nabla_{S}$ is the 
			surface gradient operator (see, e.g., \cite{monk2003}),  $Y_1^{m}, m=-1,0,1$,  
			are the   spherical harmonics of order $1$  on the unit sphere (see, e.g., \cite{colton1998inverse}),  
			$\br = (x,y,z), r= \abs{\br} $, and $\hat{\br} = \br/ r$.
		\end{example}
		
		In this numerical example, we triangulate the computational domain $\Om$ into a mesh of the type
		cub6 \cite{Naylor1999} (also called Cube-VI-II in \cite[Chapter~4]{zyphd}). Specifically, we first divide $\Om$
		into small cubes of the same size, then divide each small cube into $6$ small tetrahedrons as 
		shown in Figure~\ref{fig:faces} (left).  We note that such a mesh can be simply generated
		by \texttt{delaunayTriangulation} in MATLAB on a uniform Cartesian grid.
		
		The penalty parameters for the CIP-EEM are simply taken from the parameters used by the CIP-FEM for the Helmholtz equation \cite[Chapter~4]{zyphd}, 
		but with pure imaginary perturbations to enhance stability, i.e., we set  
		\eq{\label{gamma_f}\ga_{\fa }=\ga_1 := -\frac{\sqrt{2}}{24}-0.01\ii, \quad \ga_{\fa }=\ga_2:=-\frac{\sqrt{6}}{72} -0.01\ii, \quad\text{and}\quad 
			\ga_{\fa }=\ga_3:=-\frac{\sqrt{2}}{48} -0.01\ii} 
		according to three different kinds of interior faces as indicated in Figure~ \ref{fig:faces}. 
		We would like to remark that the real  parts $\big\{-\frac{\sqrt{2}}{24},-\frac{\sqrt{6}}{72},-\frac{\sqrt{2}}{48}\big\}$
		of the penalty parameters are obtained by a \emph{dispersion analysis} for the CIP-FEM, which is an essential tool
		to understand the  dispersive behavior of numerical schemes, 
		and it is commonly believed that the  pollution errors are of the same order as the phase difference between the exact and numerical solutions \cite{Ainsworth2004,ainsworth2004dispersive,Zhou2022Dispersion,monkdispersion,monk2003,burman1D}. 
		Such real penalty parameters can reduce phase difference of the CIP-FEM for the Helmholtz equation 
		on cub6-type meshes from $\bO(\ka^3 h^2)$ to $\bO(\ka^5 h^4)$ \cite[Chapter~4]{zyphd}, and numerical tests
		there do show that the pollution errors can also be significantly reduced. Here we expect that the penalty
		parameters in \eqref{gamma_f} can significantly reduce the pollution errors of the CIP-EEM. We postpone 
		the systematic dispersion analysis of CIP-EEM for Maxwell equations  to a future work, and only provide
		some numerical tests to confirm this expectation.

		\begin{figure}
			\begin{center}
				\includegraphics[scale=0.9,trim={0cm 1cm 0cm 0}]{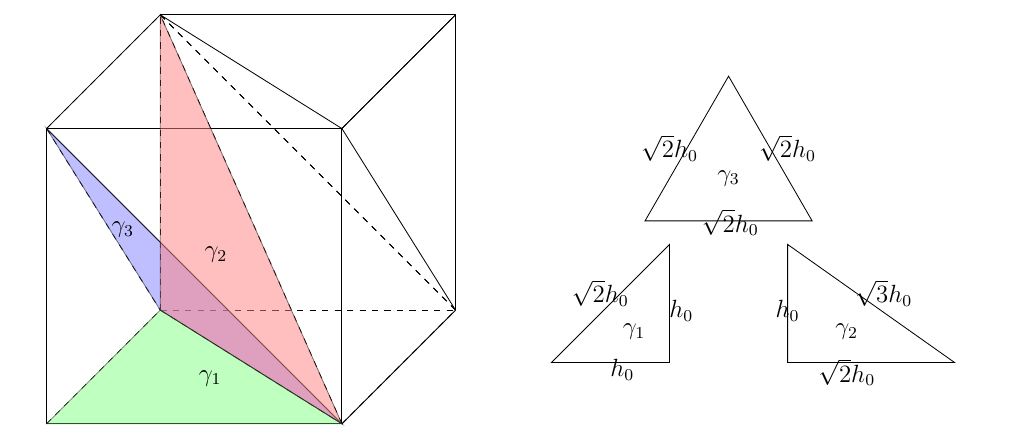}
				\vspace{1em}
			\end{center}
			\caption{Mesh of type cub6 and penalty parameters  on different interior faces.}
			\label{fig:faces}
		\end{figure}

		\begin{figure}
			\centering
			\includegraphics[scale=0.55,trim={2cm 0 2cm 0}]{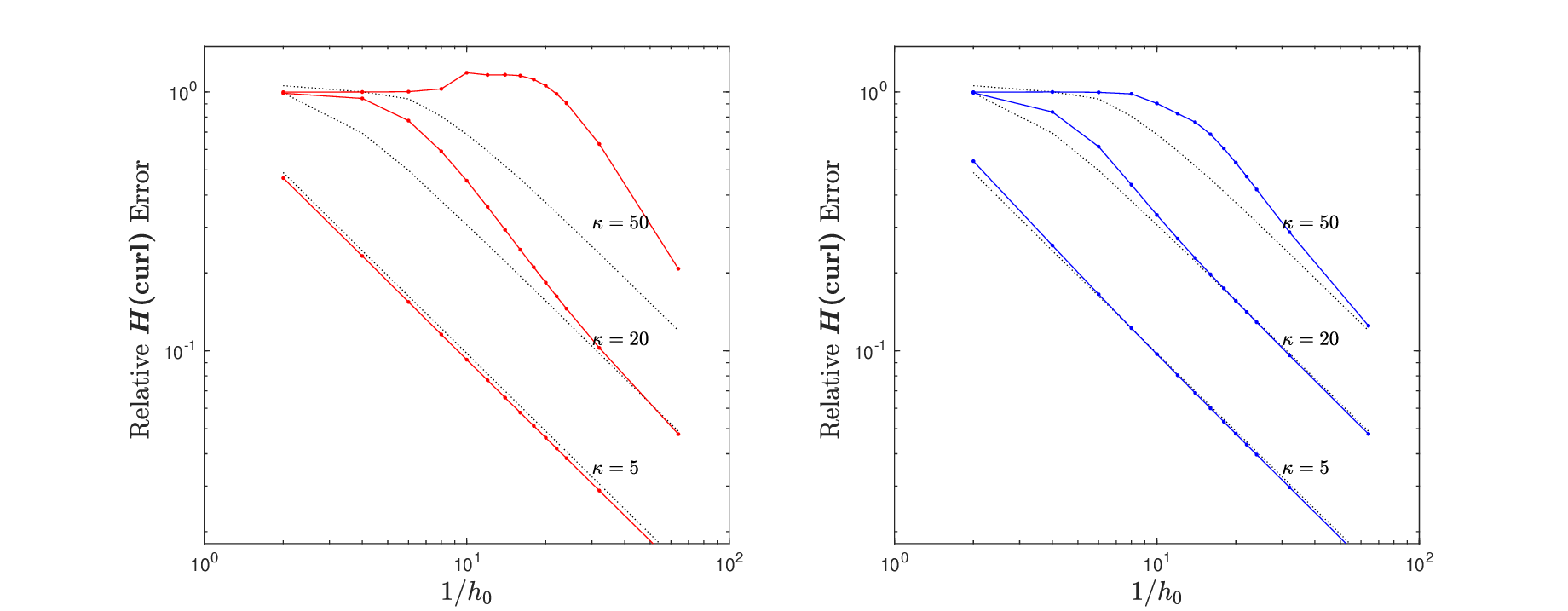}\\
			\caption{Log-log plots of  the relative $\bH(\curl)$ errors of  the   EE solution (left), the CIP-EE solution
				(right), and the interpolation (black dotted line)  versus $1/h_0$ with $1/h_0= 2, 4,\cdots, 24, 32 $ 
				and $64$,  for $\ka  = 5, 20$,  and   $50 $, respectively. 
				\label{fig:fixK}
			}
		\end{figure}
		\begin{figure}
			\centering
			\includegraphics[scale=0.55,trim={2cm 0 2cm 0}]{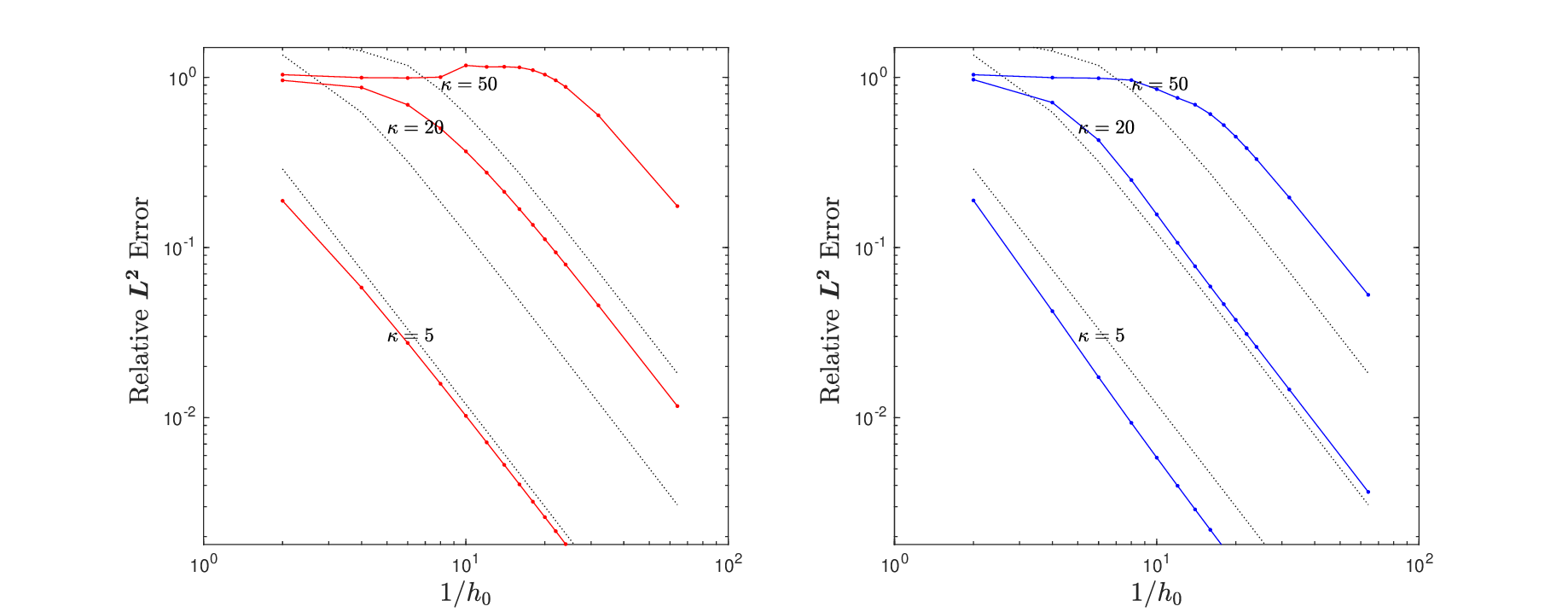}\\
			\caption{Log-log plots of the relative $\bL^2$ errors of  the  EE solution (left), the CIP-EE solution (right),
				and the interpolation (black dotted line)  versus $1/h_0$ with $1/h_0= 2, 4,\cdots, 24, 32 $ and $64$, 
				for $\ka  = 5, 20$,  and   $50 $, respectively. 
				\label{fig:fixKl2}
			}
		\end{figure}

		Figures~\ref{fig:fixK} and \ref{fig:fixKl2} illustrate the relative $\bH(\curl)$ errors  and  the relative $\bL^2$
		errors   of the EE solutions (left), the
		CIP-EE solutions (right),  and the interpolations (black dot line) for $\ka = 5$, $20$, and $50$, respectively.
		It demonstrates that when $\ka = 5$, the errors of the solutions to EEM   and  CIP-EEM  closely match those of
		the corresponding EE interpolations, implying the absence of pollution errors for small wave numbers. 
		Conversely, for large $\ka$, the relative errors of the  EE  solutions decay slowly, starting from a point
		considerably distant from the decaying point of the corresponding EEM interpolations. This behavior vividly 
		exposes the presence of pollution errors in the  EEM. The  CIP-EE  solutions exhibit 
		a similar behavior to the EE solutions, but the pollution range of the former is significantly smaller 
		than that of the latter.

		Figure~\ref{fig:ReEnerplot} presents plots of relative $\bH(\curl)$ errors   
		with mesh constraint $\ka h_0=1$ for   EEM   and  CIP-EEM  with $\ka=1,2,\cdots , 100 $, respectively. 
		Note that  for small wave number $\ka$, the errors of EE and CIP-EE solutions closely match those 
		of the corresponding EE interpolations, implying that pollution errors do not manifest for small 
		wave numbers. For large values of $\ka$, the relative errors of EE solutions deteriorate rapidly. 
		This behavior clearly demonstrates the impact of the pollution error in the EEM. 
		The CIP-EE solutions behave well till $\ka=100$, which shows the pollution effect for this method is 
		significantly smaller than that of the EEM.
		
		\begin{figure}[h]
			\centering
			\includegraphics[scale=0.65,trim={2cm 0 2cm 0}]{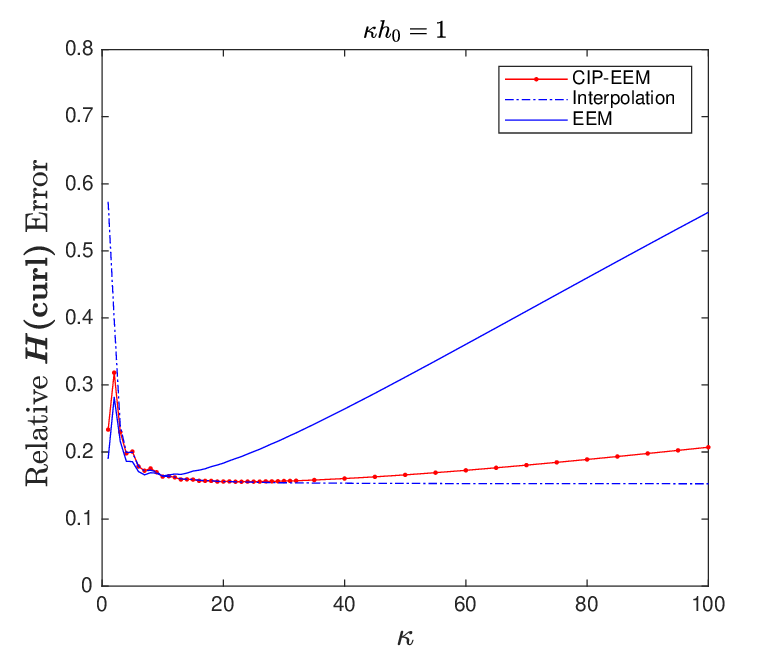}\\
			\caption{Relative $\bH(\curl)$ errors  with $\ka h_0=1$.
				\label{fig:ReEnerplot}
			}
		\end{figure}

		\appendix
		\renewcommand{\theequation}{A.\arabic{equation}}
		\renewcommand{\thetheorem}{A.\arabic{theorem}}
		\renewcommand{\thelemma}{A.\arabic{lemma}}
		\renewcommand{\thefigure}{A.\arabic{figure}}
		\setcounter{equation}{0}
		\setcounter{theorem}{0}
		\setcounter{lemma}{0}
		\setcounter{figure}{0}
		
		\section*{Appendix: Proof of Lemma~\ref{lem:Pherror}} 
		
		Denote by $\pi_N$ the interpolation onto the second-type N\'ed\'elec
		edge element space $\V_h$. The following lemma gives the interpolation error estimates.
		\begin{lemma}\label{IE} We have 
			\begin{align}
				\label{approx1}
				& \| \curl ({\bm u}-\pi_N{\bm u} ) \| \lesssim h|{\curl \bm u}|_{1},  \\
				\label{approx2}
				&\|{\bm u}-\pi_N{\bm u}\|\lesssim h^2|{\bm u}|_{2},\\
				\label{approx3}
				&\|({\bm u}-\pi_N{\bm u})_T\|_{\Ga}\lesssim h^\frac32|{\bm u}|_2,\\
				&\interleave{\bm u}-\pi_N{\bm u}\interleave\lesssim h|{\curl\bm u}|_{1}+C_{\ka h} (\ka h)^{\frac{1}{2}}h  |{\bm u}|_{2}.\label{approx4a}
			\end{align}
		\end{lemma}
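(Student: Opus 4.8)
The plan is to reduce all four bounds to the reference tetrahedron $\hat K$ via the covariant (Piola) map ${\bm v}\mapsto(F_K')^T({\bm v}|_K\circ F_K)$ that defines $\Vh$, using that this map intertwines the elementwise interpolant $\pi_N$ on $K$ with the reference interpolant $\pi_{N,\hat K}$, and that $\pi_{N,\hat K}$ reproduces $(\mathcal P_1(\hat K))^3$. I would prove \eqref{approx2} first, then \eqref{approx1} through the commuting-diagram property, next \eqref{approx3} by a scaled trace inequality, and finally assemble \eqref{approx4a} from the previous three.

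First I would establish \eqref{approx2}. Setting $\hat{\bm v}:=(F_K')^T({\bm u}|_K\circ F_K)$, the edge degrees of freedom are bounded functionals on $\bH^2(\hat K)$, so $\pi_{N,\hat K}$ is bounded there; since it reproduces $(\mathcal P_1(\hat K))^3$, the Bramble--Hilbert lemma gives $\|\hat{\bm v}-\pi_{N,\hat K}\hat{\bm v}\|_{0,\hat K}\ls|\hat{\bm v}|_{2,\hat K}$. Transforming back, the geometric factors $\|F_K'\|$ and $\|(F_K')^{-1}\|$ are controlled by the shape-regularity constant $\sigma$ and produce the power $h^2$; summing over $K\in\Th$ yields \eqref{approx2}. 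The identical argument, stopped one derivative earlier, supplies the auxiliary bound $|{\bm u}-\pi_N{\bm u}|_{1,K}\ls h|{\bm u}|_{2,K}$ needed below.

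For \eqref{approx1} I would invoke the commuting-diagram property $\curl\circ\pi_N=\rho_h\circ\curl$, where $\rho_h$ is the interpolation onto the associated $\bH(\dive)$-conforming (face) space whose lowest-order instance reproduces constant vector fields. Then $\curl({\bm u}-\pi_N{\bm u})=(I-\rho_h)\curl{\bm u}$, and the standard Raviart--Thomas estimate (constant reproduction plus Bramble--Hilbert) gives $\|(I-\rho_h)\curl{\bm u}\|\ls h|\curl{\bm u}|_1$, i.e.\ \eqref{approx1}. This is the step I expect to be the main obstacle: it is precisely the commuting diagram that converts the naive bound $h|{\bm u}|_2$ into the sharper $h|\curl{\bm u}|_1$ appearing in \eqref{approx4a}, and with the curved element maps $F_K$ one must check that the diagram still commutes and that the scaling powers come out correctly.

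The boundary bound \eqref{approx3} then follows routinely: on a boundary element $K$ with face $\fa\subset\Ga$, the scaled trace inequality $\|{\bm w}\|_{0,\fa}^2\ls h^{-1}\|{\bm w}\|_{0,K}^2+h\,|{\bm w}|_{1,K}^2$ applied to ${\bm w}={\bm u}-\pi_N{\bm u}$, together with \eqref{approx2} and the auxiliary $\bH^1$-seminorm bound, gives $\|({\bm u}-\pi_N{\bm u})_T\|_{0,\fa}^2\ls h^{-1}h^4|{\bm u}|_{2,K}^2+h\cdot h^2|{\bm u}|_{2,K}^2\ls h^3|{\bm u}|_{2,K}^2$; summing over boundary faces yields \eqref{approx3}. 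Finally, for \eqref{approx4a} I would combine \eqref{approx1}--\eqref{approx3} with the definition of the energy norm to obtain $\norme{{\bm u}-\pi_N{\bm u}}^2\ls h^2|\curl{\bm u}|_1^2+\ka^2h^4|{\bm u}|_2^2+\ka h^3|{\bm u}|_2^2$; writing $\ka^2h^4+\ka h^3=(1+\ka h)(\ka h)h^2$ and setting $C_{\ka h}:=(1+\ka h)^{1/2}$ (which satisfies $C_{\ka h}\ls1$ when $\ka h\ls1$), the right-hand side is $\ls h^2|\curl{\bm u}|_1^2+C_{\ka h}^2(\ka h)h^2|{\bm u}|_2^2$, and \eqref{approx4a} follows by taking square roots.
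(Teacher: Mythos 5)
Your proposal is correct and follows essentially the same route as the paper: the paper simply cites \cite[Theorems 8.15, 5.41]{monk2003} for \eqref{approx1}--\eqref{approx2} (whose proofs are exactly the covariant-scaling/Bramble--Hilbert and commuting-diagram arguments you spell out), obtains \eqref{approx3} from the local trace inequality as you do, and assembles \eqref{approx4a} from the first three with the same bookkeeping for $C_{\ka h}$. Your version just fills in the details of the cited results.
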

		\begin{proof}
			For \eqref{approx1}--\eqref{approx2}, we refer to \cite[Theorems 8.15, 5.41]{monk2003}. The estimate \eqref{approx3} follows from the local trace inequality and  \eqref{approx4a} follows from \eqref{approx1}--\eqref{approx3}. The proof is completed.
		\end{proof}
		
		Without loss of generality, we prove Lemma~\ref{lem:Pherror} only for $\tilde{{\bm u}}_h:=P_h^+\bm u$. The proof of \eqref{eq:Pherror_energy} is obvious and is omitted. Next we prove \eqref{eq:Pherror_l2} and \eqref{eq:boundPherror}. 
		
		The following lemma gives an estimate of $\|({\bm u}-\tilde{\bm u}_h)_T\|_{\Ga}$.
		
		\begin{lemma}\label{lem:bd}
			It holds
			\begin{align}
				\label{trace}
				\|({\bm u}-\tilde{\bm u}_h)_T\|_{\Ga}\ls \|({\bm u}-{\bm v}_h)_T\|_{\Ga}+C_{\ka h}\big( h^{-\frac12}\|{\bm u}-{\bm v}_h\| +h^\frac12\interleave{\bm u}-{\bm v}_h\interleave\big)   \quad \forall \bm v_h\in \V_h.
			\end{align}
		\end{lemma}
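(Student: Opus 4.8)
The plan is to prove Lemma~\ref{lem:bd} by a duality argument that trades the tangential trace of the projection error for its volume $\bL^2$ error, the latter being controlled by an Aubin--Nitsche estimate against the coercive form $\hat a$ of \eqref{eq:hat a}. Write $\bm e:=\bm u-\tilde{\bm u}_h$ and, for the fixed but arbitrary $\bm v_h\in\V_h$, $\bm r:=\bm u-\bm v_h$ and $\bm w:=\tilde{\bm u}_h-\bm v_h\in\V_h$, so that $\bm e=\bm r-\bm w$. First I would apply the triangle inequality, $\|\bm e_T\|_\Ga\le\|\bm r_T\|_\Ga+\|\bm w_T\|_\Ga$, which already produces the leading term. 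Since $\bm w$ is piecewise polynomial, the scaled discrete trace inequality on the layer of boundary elements gives $\|\bm w_T\|_\Ga\ls h^{-1/2}\|\bm w\|\le h^{-1/2}\big(\|\bm r\|+\|\bm e\|\big)$; the term $h^{-1/2}\|\bm r\|$ is of the desired form, so the whole lemma reduces to the $\bL^2$ estimate $\|\bm e\|\ls C_{\ka h}\,h\,\norme{\bm r}$, which converts $h^{-1/2}\|\bm e\|$ into the remaining term $C_{\ka h}h^{1/2}\norme{\bm r}$.

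To establish this $\bL^2$ bound I would introduce the dual solution $\bm\chi\in\V$ of $\hat a(\bm\phi,\bm\chi)=(\bm\phi,\bm e)$ for all $\bm\phi\in\V$. Because $\hat a$ is the shifted, non-resonant form, this problem is genuinely coercive, $\re\hat a(\bm v,\bm v)-\im\hat a(\bm v,\bm v)=\norme{\bm v}^2$ (the $\ga=0$ instance of Lemma~\ref{lem:ahat}); the energy identity gives $\norme{\bm\chi}\ls\ka^{-1}\|\bm e\|$, and mimicking the regularity argument behind \eqref{eq:H2stab} for this good sign one obtains the \emph{clean} wave-number-explicit bound $|\curl\bm\chi|_1+|\bm\chi|_2\ls\|\bm e\|$, with no power of $\ka$ lost. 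Taking $\bm\phi=\bm e$ and using the Galerkin orthogonality $\hat a(\bm e,\bm v_h')=0$ of $\tilde{\bm u}_h=\boldsymbol{P}^{+}_h\bm u$, I would write $\|\bm e\|^2=\hat a(\bm e,\bm\chi)=\hat a(\bm e,\bm\chi-\pi_N\bm\chi)$ with $\pi_N\bm\chi$ the N\'ed\'elec interpolant, and bound the three parts of $\hat a$ separately using $\norme{\bm e}\ls\norme{\bm r}$ (the already-available \eqref{eq:Pherror_energy}) together with the interpolation estimates of Lemma~\ref{IE}. The two volume parts are then controlled by $C_{\ka h}h\norme{\bm r}\,\|\bm e\|+\ka^2h^2\|\bm e\|^2$, the quadratic piece being absorbed since $\ka^2h^2\le\ka^3h^2\le C_0$.

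The step I expect to be the main obstacle is the impedance boundary term $\ka\la\langle\bm e_T,(\bm\chi-\pi_N\bm\chi)_T\rangle$ arising in the expansion of $\hat a(\bm e,\bm\chi-\pi_N\bm\chi)$: estimating it through \eqref{approx3} reintroduces $\|\bm e_T\|_\Ga$, so the bounds for $\|\bm e\|$ and for $\|\bm e_T\|_\Ga$ are genuinely coupled. I would break the coupling by substituting the first-paragraph reduction $\|\bm e_T\|_\Ga\ls\|\bm r_T\|_\Ga+h^{-1/2}\|\bm r\|+h^{-1/2}\|\bm e\|$ into this term; using $|\bm\chi|_2\ls\|\bm e\|$ the boundary contribution is of size $\ka h^{3/2}\|\bm e_T\|_\Ga\|\bm e\|$, whose worst piece is $\ka h\,\|\bm e\|^2$, and this is absorbed precisely because the mesh condition forces $\ka h\ls 1$. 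It is exactly here that the clean regularity $|\bm\chi|_2\ls\|\bm e\|$ (rather than the resonant $\ka\|\bm e\|$) is essential, for otherwise the boundary coupling would not be absorbable under $\ka^3h^2\le C_0$. Dividing through by $\|\bm e\|$ and using $\ka\|\bm r\|\le\norme{\bm r}$ and $\ka^{1/2}\|\bm r_T\|_\Ga\le\norme{\bm r}$ to simplify the lower-order remainders yields $\|\bm e\|\ls C_{\ka h}h\norme{\bm r}$, and feeding this back into $\|\bm e_T\|_\Ga\le\|\bm r_T\|_\Ga+h^{-1/2}(\|\bm r\|+\|\bm e\|)$ gives the assertion of Lemma~\ref{lem:bd}. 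The remaining work is the careful bookkeeping of the $\ka$- and $h$-powers so that every constant is of $C_{\ka h}$ type.
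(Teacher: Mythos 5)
Your overall architecture inverts the paper's: the paper proves the boundary-trace estimate \emph{first} (and only afterwards the $\bL^2$ estimate of Lemma~\ref{lem:l2err}, whose proof consumes Lemma~\ref{lem:bd}), whereas you reduce the trace estimate to an $\bL^2$ bound $\|{\bm u}-\tilde{\bm u}_h\|\ls C_{\ka h}h\norme{{\bm u}-{\bm v}_h}$ that you then try to prove by a direct Aubin--Nitsche argument with dual datum $\bm e={\bm u}-\tilde{\bm u}_h$. The fatal step is the claimed clean regularity $|\bm\chi|_2\ls\|\bm e\|$ for the solution of $\hat a(\bm\phi,\bm\chi)=(\bm\phi,\bm e)$. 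Testing with $\bm\phi=\nabla\psi$, $\psi\in H^1_0(\Om)$, shows $\ka^2\dive\bm\chi=\dive\bm e$ distributionally; but $\tilde{\bm u}_h$ is an edge-element function with only tangential continuity, so $\dive\tilde{\bm u}_h$ (hence $\dive\bm e$) contains surface distributions supported on the interelement faces. Consequently $\dive\bm\chi\notin H^1(\Om)$ and $\bm\chi\notin\bH^2(\Om)$ at all --- the bound $|\bm\chi|_2\ls\|\bm e\|$ is not merely unproved but false. Lemma~\ref{thm:stability} and its $\bH^2$ variant require $\dive\f=0$ (or, per Remark~\ref{rem3.1}(c), cost an extra $\ka^{-3}\|\dive\f\|_1$ and a $C^3$ domain), and the $\bH^2$ bound also carries $\|\f\cdot\n\|_{\frac12,\Ga}$, none of which is controlled by $\|\bm e\|$.

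The missing idea is precisely the device the paper's proof is built around: the Helmholtz decomposition of the \emph{discrete difference} ${\bm \Phi}_h=\tilde{\bm u}_h-{\bm v}_h=\nabla r+\w=\nabla r_h+\w_h$ with $\dive\w=0$ and $\w\cdot\n=0$ on $\Ga$. The gradient part is annihilated by the projection orthogonality, $d({\bm u}-\tilde{\bm u}_h,\nabla r_h)=\hat a({\bm u}-\tilde{\bm u}_h,\nabla r_h)=0$ with $d({\bm u},\bv)=\ka^2({\bm u},\bv)-\ii\ka\la\bra{\bm u}_T,\bv_T\ket$, which yields $\ka\la\|({\bm u}-\tilde{\bm u}_h)_T\|_\Ga^2\ls \ka\la\|({\bm u}-{\bm v}_h)_T\|_\Ga^2+\ka\la\|\w_{h,T}\|_\Ga^2+\ka^2\|{\bm u}-{\bm v}_h\|^2+\ka^2\|\w_h\|^2$ without ever invoking $\|{\bm u}-\tilde{\bm u}_h\|$; only $\|\w\|$ then needs a duality argument, and because $\w$ is divergence-free with vanishing normal trace the dual solution genuinely satisfies $\|\z\|_2\ls\|\w\|$. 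Your proposal has no counterpart to this decomposition, and without it neither the regularity step nor the decoupling of $\|\bm e\|$ from $\|\bm e_T\|_\Ga$ can be carried out. (A secondary point: your absorption of $\ka h\|\bm e\|^2$ would make the projection estimates conditional on $\ka h$ being small, whereas the paper's Lemma~\ref{lem:Pherror} holds unconditionally with constants of type $C_{\ka h}$.)
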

		\begin{proof}
			Let ${\bm \Phi}_h:=\tilde{{\bm u}}_h-{\bm v}_h\in \V_h$. Similar  to Lemma \ref{lem:HD}, we have the following decompositions (see, e.g., \cite[Remark~3.46, Lemma~7.6]{monk2003}):
			\begin{align}
				\label{decom2}
				{\bm \Phi}_h=\nabla r+\w=\nabla r_h+\w_h,
			\end{align}
			where $r\in H^1(\Omega)$, $r_h\in U_h$, $\w\in \bH^1(\Omega)$, and $\w_h\in \V_h$, $\w$ is divergence-free in $\Om$, $\w\cdot\boldsymbol{\nu}=0$ on $\Ga $, and there also holds
			\begin{align}
				\label{wd}
				\|\w-\w_h\| \lesssim h\|{\curl}\ {\bm \Phi}_h\| \lesssim h\interleave{\bm u}-{\bm v}_h\interleave,
			\end{align}
			where we have used \eqref{eq:Pherror_energy} to derive the last inequality. 
			
			Next, we   establish  a relationship between $\|({\bm u}-\tilde{\bm u}_h)_T\|_{\Ga}$ and $\|\w\| $. Denote by 
			\begin{align*}
				d({\bm u},\bm v):=\ka^2({\bm u},\bm v)-{\ii} \ka\lambda \langle {\bm u}_T,\bm v_T\rangle.
			\end{align*}
			From \eqref{eq:hat a} and \eqref{eq:proj}, we have
			\begin{align}\label{drh}
				d({\bm u}-\tilde{\bm u}_h,\na r_h)=\hat a ({\bm u}-\tilde{\bm u}_h,\na r_h)=0,
			\end{align}
			which implies
			\begin{align*}
				d({\bm u}-\tilde{\bm u}_h,{\bm u}-\tilde{\bm u}_h)=d({\bm u}-\tilde{\bm u}_h,{\bm u}-{\bm v}_h-\w_h).
			\end{align*}
			And hence from the Cauchy's inequality we obtain
			\begin{align*}
				\big|d({\bm u}-\tilde{\bm u}_h,{\bm u}-\tilde{\bm u}_h)\big|\ls \big|d({\bm u}-{\bm v}_h-\w_h,{\bm u}-{\bm v}_h-\w_h)\big|,
			\end{align*}
			which gives
			\begin{align*}
				\ka\la\|({\bm u}-\tilde{\bm u}_h)_T\|_{\Ga}^2\le \ka\la\|({\bm u}-{\bm v}_h)_T\|_{\Ga}^2+\ka\la\|\w_{h,T} \|_{\Ga}^2+\ka^2\|{\bm u}-{\bm v}_h\| ^2+\ka^2\|\w_h\| ^2.
			\end{align*}
			Therefore, by noting $\| \w_{h,T} \|_{\Ga}\ls h^{-\frac12}\|\w_h\|$ and using \eqref{wd}, we conclude that
			\begin{align}\label{lb}
				\|({\bm u}-\tilde{\bm u}_h)_T\|_{\Ga}\ls& \|({\bm u}-{\bm v}_h)_T\|_{\Ga}+\ka^\frac12\|{\bm u}-{\bm v}_h\|\notag \\
				&+C_{\ka h}\big(h^{-\frac12}\|\w\| +h^\frac12\interleave{\bm u}-{\bm v}_h\interleave\big).
			\end{align}
			
			We utilize the duality argument to estimate $\|\w\|$, first we begin by introducing
			the dual problem:
			\begin{alignat}{2}
				\label{D-equations}
				{\curl}\,{\curl}\,{\bm z}+\ka^2 {\bm z}&=\w \qquad &&{\rm in }\ \Omega,\\
				\label{D-boundary}{\curl}\,{\bm z}\times \boldsymbol{\nu}+{\ii}\ka \lambda {\bm z}_T&= \bm{0} \qquad &&{\rm on }\
				\Ga,
			\end{alignat}
			or in the variational form:
			\eq{\label{DPz}
				\hat a(\bm v,\bm z)=(\bm v, \bm w)  \quad\forall \bm v\in \V.
			}
			Noting that $\w\cdot\boldsymbol{\nu}=0$ on $\Ga $, similar to  the proof of \eqref{eq:H2stab}, we may derive  the following $\bH^2$ regularity estimate for the above problem:
			\begin{align}
				\|{\bm z}\|_{2}&\ls \|\w\|.\label{D-regularity-H2}
			\end{align}
			From \eqref{DPz}, \eqref{approx4a}, we may deduce that
			\begin{align}
				\notag
				({\bm u}-\tilde{\bm u}_h,\w)
				&=\hat a ({\bm u}-\tilde{\bm u}_h,{\bm z}-\pi_N {\bm z})
				\lesssim   \interleave{\bm u}-\tilde{\bm u}_h\interleave \interleave {\bm z}-\pi_N {\bm z} \interleave\\
				&\ls C_{\ka h} \interleave{\bm u}-\tilde{\bm u}_h\interleave  h|\bm z|_2\ls C_{\ka h} h \interleave{\bm u}-\bm v_h\interleave \|\bm w\|. \label{Dpart4}
			\end{align}
			Since $\|\w\|^2=({\bm \Phi}_h,\w)=({\bm u}-{\bm v}_h,\w)-({\bm u}-\tilde{\bm u}_h,\w),$ we have from \eqref{Dpart4} that
			\begin{align}
				\label{w}
				\|\w\| \lesssim \|{\bm u}-{\bm v}_h\|+ C_{\ka h} h\interleave{\bm u}-{\bm v}_h\interleave,
			\end{align}
			which together with \eqref{lb} completes the proof of the lemma. 
		\end{proof}
		
		The following lemma gives an estimate of $\|{\bm u}-\tilde{{\bm u}}_h\|$. 
		
		\begin{lemma}\label{lem:l2err}
			We have
			\begin{align}
				\label{error}
				\|{\bm u}-\tilde{{\bm u}}_h\|  \lesssim C_{\ka h}\big(\|{\bm u}-{\bm v}_h\| +h\interleave{\bm u}-{\bm v}_h\interleave+h^\frac12\|({\bm u}-{\bm v}_h)_T\|_{\Ga}\big)  \quad \forall \bm v_h\in \V_h.
			\end{align}
		\end{lemma}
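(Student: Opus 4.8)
The plan is to bound $\|{\bm e}\|$, where ${\bm e}:={\bm u}-\tilde{\bm u}_h$, by the same delicate decomposition and modified duality argument used in the stability proof of Theorem~\ref{thm:EEMStability} and in Lemma~\ref{lem:bd}; a direct Aubin--Nitsche argument applied to ${\bm e}$ fails because the dual field inherits the non-solenoidal part of ${\bm e}$, which is only $\bL^2$ and thus destroys the $\bH^2$-regularity needed for the interpolation estimate \eqref{approx4a}. Writing ${\bm e}=({\bm u}-{\bm v}_h)-{\bm \Phi}_h$ with ${\bm \Phi}_h=\tilde{\bm u}_h-{\bm v}_h\in\V_h$, it suffices to estimate $\|{\bm \Phi}_h\|$. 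First I would strip off the boundary layer: by Lemma~\ref{lem:Vh0} there is ${\bm \Phi}_h^0\in\V_h^0$ with $\|{\bm \Phi}_h-{\bm \Phi}_h^0\|\ls h^{1/2}\|{\bm \Phi}_{h,T}\|_{\Ga}$ and $\|\curl({\bm \Phi}_h-{\bm \Phi}_h^0)\|\ls h^{-1/2}\|{\bm \Phi}_{h,T}\|_{\Ga}$. Since ${\bm \Phi}_{h,T}=({\bm u}-{\bm v}_h)_T-{\bm e}_T$, the trace bound \eqref{trace} already proved in Lemma~\ref{lem:bd} gives $\|{\bm \Phi}_{h,T}\|_{\Ga}\ls\|({\bm u}-{\bm v}_h)_T\|_{\Ga}+C_{\ka h}\big(h^{-1/2}\|{\bm u}-{\bm v}_h\|+h^{1/2}\norme{{\bm u}-{\bm v}_h}\big)$, so that $\|{\bm \Phi}_h-{\bm \Phi}_h^0\|$ is already of the asserted form $C_{\ka h}\big(\|{\bm u}-{\bm v}_h\|+h\norme{{\bm u}-{\bm v}_h}\big)+h^{1/2}\|({\bm u}-{\bm v}_h)_T\|_{\Ga}$. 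This is exactly where the boundary term $h^{1/2}\|({\bm u}-{\bm v}_h)_T\|_{\Ga}$ in the statement originates.

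It then remains to estimate the interior part $\|{\bm \Phi}_h^0\|$, and here I would apply the discrete Helmholtz decomposition of Lemma~\ref{lem:HD} to ${\bm \Phi}_h^0\in\V_h^0$, writing ${\bm \Phi}_h^0=\nabla r_h^0+{\bm w}_h^0$ with $r_h^0\in U_h^0$ and ${\bm w}_h^0$ discretely divergence-free; this is $\bL^2$-orthogonal, so $\|{\bm \Phi}_h^0\|^2=\|\nabla r_h^0\|^2+\|{\bm w}_h^0\|^2$. For the gradient part I would use the Galerkin orthogonality \eqref{eq:proj} of the projection: since $\nabla r_h^0\in\bH_0(\curl;\Om)\cap\V_h$ is curl-free \emph{and} has vanishing tangential trace (because $r_h^0\in U_h^0$), testing $\hat a({\bm e},\cdot)=0$ against $\nabla r_h^0$ annihilates both the curl term and the boundary term, leaving $({\bm e},\nabla r_h^0)=0$. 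Consequently $\|\nabla r_h^0\|^2=({\bm \Phi}_h^0,\nabla r_h^0)=\big(({\bm u}-{\bm v}_h)-({\bm \Phi}_h-{\bm \Phi}_h^0),\nabla r_h^0\big)$, whence by Cauchy--Schwarz $\|\nabla r_h^0\|\ls\|{\bm u}-{\bm v}_h\|+\|{\bm \Phi}_h-{\bm \Phi}_h^0\|$, again of the asserted form.

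For the solenoidal part ${\bm w}_h^0$ I would mimic Step~3 of the proof of Theorem~\ref{thm:EEMStability} and the duality step of Lemma~\ref{lem:bd}: one introduces the continuous solenoidal field ${\bm w}^0$ with $\curl{\bm w}^0=\curl{\bm \Phi}_h^0$ and $\|{\bm w}_h^0-{\bm w}^0\|\ls h\|\curl{\bm \Phi}_h^0\|$, and estimates $\|{\bm w}^0\|$ from $\|{\bm w}^0\|^2=({\bm \Phi}_h^0,{\bm w}^0)$ together with a modified duality argument. Crucially the relevant dual problem is governed by the \emph{coercive} form $\hat a$ (sign $+\ka^2$), so one can use the $\ka$-uniform $\bH^2$-regularity of its solution ${\bm z}$, giving $({\bm e},{\bm w}^0)=\hat a({\bm e},{\bm z}-\pi_N{\bm z})\ls\norme{{\bm e}}\,\norme{{\bm z}-\pi_N{\bm z}}\ls C_{\ka h}h\,\norme{{\bm u}-{\bm v}_h}\,\|{\bm w}^0\|$ by \eqref{approx4a} and the best-approximation bound \eqref{eq:Pherror_energy} for $\norme{{\bm e}}$. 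This yields $\|{\bm w}_h^0\|\ls\|{\bm u}-{\bm v}_h\|+\|{\bm \Phi}_h-{\bm \Phi}_h^0\|+C_{\ka h}h\norme{{\bm u}-{\bm v}_h}$, where $\|\curl{\bm \Phi}_h^0\|$ is controlled by $\norme{{\bm u}-{\bm v}_h}+h^{-1/2}\|{\bm \Phi}_{h,T}\|_{\Ga}$. Collecting the three contributions gives \eqref{error}.

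The crux is the gradient/solenoidal splitting together with the dual regularity. The hard part is that the gradient part, handled naively, produces a boundary orthogonality term weighted by $\ka^{-1}$ that is too large in the resolved regime $\ka h\ls1$; stripping the boundary layer first, so that the discrete potential lies in $U_h^0$ and $(\nabla r_h^0)_T=0$, is precisely what forces this term to vanish exactly. Dually, one must work with the coercive form $\hat a$ (not the indefinite form of the stability proof) to keep the $\bH^2$-regularity of ${\bm z}$ free of powers of $\ka$, so that the duality step contributes only $C_{\ka h}h\norme{{\bm u}-{\bm v}_h}\|{\bm w}^0\|$ rather than a $\ka$-amplified term that would spoil the $\bO(h^2)$ rate in \eqref{eq:Pherror_l2}.
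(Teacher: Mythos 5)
Your proposal is correct and follows essentially the same route as the paper's proof --- stripping the boundary contribution via Lemma~\ref{lem:Vh0}, the discrete Helmholtz decomposition of Lemma~\ref{lem:HD}, the orthogonality $({\bm u}-\tilde{{\bm u}}_h,\nabla\phi_h^0)=0$ from \eqref{eq:proj}, and a duality argument for the coercive form $\hat a$ --- differing only in bookkeeping (you bound the gradient and solenoidal parts separately and invoke Lemma~\ref{lem:bd} up front, whereas the paper combines $\|{\bm u}-\tilde{{\bm u}}_h\|^2+\|\w^0\|^2$ in a single identity and applies Lemma~\ref{lem:bd} at the very end). The one imprecision is the claim of $\ka$-uniform $\bH^2$-regularity: the dual solution satisfies only $\|{\boldsymbol{\Psi}}\|_2\ls\|\w^0\|+\ka^{-1}\|\curl\w^0\|$, so the duality step also produces terms involving $\|\curl{\boldsymbol{\Phi}}_h^0\|$ which, as in the paper's \eqref{approx4}, must be absorbed by Young's inequality; this does not affect the final bound.
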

		\begin{proof} The idea is to convert the estimation of $\|{\bm u}-\tilde{{\bm u}}_h\| $ to that of $\|({\bm u}-\tilde{\bm u}_h)_T\|_{\Ga}$. For
			${\bm \Phi}_h=\tilde{{\bm u}}_h-{\bm v}_h$, according   to Lemma~\ref{lem:Vh0}, there exists ${\bm \Phi}_h^c\in \V_h^0$ such that
			\begin{align}
				\label{L2}
				\|{\bm \Phi}_h-{\bm \Phi}_h^c\| +h\|{\curl\,}({\bm \Phi}_h-{\bm \Phi}_h^c)\| &\lesssim h^{\frac{1}{2}}\| {\bm \Phi}_{h,T}\|_{\Ga}.
			\end{align}
			From Lemma~\ref{lem:HD} we have the following discrete Helmholtz decomposition for ${\bm \Phi}_h^c$:
			$${\bm \Phi}_h^c=\w_h^0+\nabla r_h^0,$$
			where $r_h^0\in U_h^0$ and $\w_h^0\in\V_h^0$ is discrete divergence-free. Moreover, there exists  $\w^0\in \bH_0({\curl})$ such that $\dive \w^0=0$, $\curl w^0=\curl {\bm \Phi}_h^c$, and
			\begin{align}
				\label{w1}
				\|\w_h^0-\w^0\|\lesssim h\|{\curl}\,\w_h^0\| =h\|{\curl}\,{\bm \Phi}_h^c\|.
			\end{align}
			From \eqref{eq:proj}, we know that
			\begin{align}
				\label{dis-div}
				({\bm u}-\tilde{\bm u}_h,\nabla \phi_h^0)=0  \quad \forall \phi_h^0\in U_h^0.
			\end{align}
			Next, we  introduce the following dual problem
			\begin{alignat}{2}
				\label{A-equations}
				{\curl}\,{\curl}\,{{\bm \Psi}}+\ka^2 {{\bm \Psi}}&=\w^0  \quad &&{\rm in }\ \Omega,\\
				\label{A-boundary}{\curl}\,{{\bm \Psi}}\times \boldsymbol{\nu}+{\ii}\ka \lambda {{\bm \Psi}}_T&= \bm{0} \quad  &&{\rm on }\  \Ga,
			\end{alignat}
			or in the variational form: 
			\eq{\label{DAz}
				\hat a(\bm v,\bm \Psi)=(\bm v, \bm w^0)  \quad\forall \bm v\in \V.
			}
			Similar to the proof of Lemma~\ref{thm:stability} (see also \cite[Theorem~4.3]{Melenk_2020}), we have the following  estimates:
			\begin{align}
				\label{A-regularity}
				\|{\bm \Psi}\|_{\bH^1(\curl)}&\lesssim \|\w^0\|,\\
				\label{A-regularity-H2}
				\|{\bm \Psi}\|_{2}&\ls \|\w^0\| +\ka^{-1}\|\w^0\|_{1}\lesssim \|\w^0\| +\ka^{-1}\|\curl\w^0\| \notag\\
				&=\|\w^0\| +\ka^{-1}\|{\curl}\,{\bm \Phi}_h^c\|.
			\end{align}
			Using \eqref{DAz}, we obtain
			\begin{align}
				&({\bm u}-\tilde{\bm u}_h,\w^0)
				\label{part4} =\hat a({\bm u}-\tilde{\bm u}_h,{\bm \Psi})=\hat a({\bm u}-\tilde{\bm u}_h,{\bm \Psi}-\pi_N\bm \Psi).
			\end{align}
			Using \eqref{A-regularity}--\eqref{part4} and Lemma~\ref{IE}, we conclude that
			\begin{align}\label{approx4}
				&\big|({\bm u}-\tilde{\bm u}_h,\w^0)\big|
				\lesssim \interleave{\bm u}-\tilde{\bm u}_h\interleave h |{\bm \Psi}|_{\bH^1(\curl)}+\big(\ka^2h^2\|{\bm u}-\tilde{\bm u}_h\|+\ka h^{\frac32}\|({\bm u}-\tilde{\bm u}_h)_T\|_{\Ga}\big)|{\bm \Psi}|_{2}\notag
				\\
				&\lesssim C_{\ka h}h\interleave{\bm u}-\tilde{\bm u}_h\interleave \|\w^0\|+\big( h^2\interleave{\bm u}-\tilde{\bm u}_h\interleave +  h^{\frac32}\|({\bm u}-\tilde{\bm u}_h)_T\|_{\Ga}\big)\|{\curl}\,{\bm \Phi}_h^c\|.
			\end{align}
			From \eqref{dis-div} and the orthogonality between $\w^0$ and $\nabla r_h^0$, we
			may get
			\eqn{\|{\bm u}-\tilde{\bm u}_h\|^2+\|\w^0\| ^2&=({\bm u}-\tilde{\bm u}_h+\w^0,{\bm u}-\tilde{\bm u}_h+\w^0)-2\re({\bm u}-\tilde{\bm u}_h,\w^0)\\
				&=({\bm u}-\tilde{\bm u}_h+\w^0,{\bm u}-{\bm v}_h)-({\bm u}-\tilde{\bm u}_h+\w^0,{\bm \Phi}_h-{\bm \Phi}_h^c)\\
				&\quad-({\bm u}-\tilde{\bm u}_h+\w^0,\w_h^0-\w^0)-2\re({\bm u}-\tilde{\bm u}_h,\w^0), }
			which together with  (\ref{approx4}) and the Young's inequality, gives
			\begin{align*}
				\|{\bm u}-\tilde{\bm u}_h\|^2 +\|\w^0\| ^2&\lesssim 
				\|{\bm u}-{\bm v}_h\| ^2+\|{\bm \Phi}_h-{\bm \Phi}_h^c\| ^2+\|\w_h^0-\w^0\| ^2\\
				&\quad+C_{\ka h}h^2\interleave{\bm u}-\tilde{\bm u}_h\interleave^2+  h \|({\bm u}-\tilde{\bm u}_h)_T\|_{\Ga}^2+h^2\|{\curl}\,{\bm \Phi}_h^c\| ^2.
			\end{align*}
			By \eqref{eq:Pherror_energy}, (\ref{L2}) and (\ref{w1}), we have
			\begin{align}
				\notag
				\|{\bm u}-\tilde{\bm u}_h\|+\|\w^0\| &\lesssim \|{\bm u}-{\bm v}_h\| +h^{\frac{1}{2}}\| {\bm \Phi}_{h,T} \|_{\Ga}+h\|{\curl}\,{\bm \Phi}_h^c\| \\
				\label{Hw}
				&\quad+C_{\ka h} h\interleave{\bm u}-{\bm v}_h\interleave+ h^\frac12\|({\bm u}-\tilde{\bm u}_h)_T\|_{\Ga}.
			\end{align}
			From (\ref{L2}) and \eqref{eq:Pherror_energy} we have
			\begin{align*}
				h\|{\curl}\,{\bm \Phi}_h^c\| &\leq h\|{\curl}\,({\bm \Phi}_h-{\bm \Phi}_h^c)\| +h\|{\curl}\,{\bm \Phi}_h\| \lesssim h^{\frac{1}{2}}\| {\bm \Phi}_{h,T} \|_{\Ga}+ h\interleave{\bm u}-{\bm v}_h\interleave.
			\end{align*}
			While using  the triangle inequality, we get
			\begin{align*}
				h^{\frac{1}{2}}\| {\bm \Phi}_{h,T} \|_{\Ga}&\leq h^{\frac{1}{2}}\big(\|({\bm u}-\tilde{\bm u}_h)_T\|_{\Ga}+ \|({\bm u}-{\bm v}_h)_T\|_{\Ga}\big).
			\end{align*}
			Inserting the above two inequalities into (\ref{Hw}), we obtain 
			\begin{align}
				\label{L2tilde}
				\|{\bm u}-\tilde{{\bm u}}_h\|  \lesssim&h^{\frac{1}{2}}\|({\bm u}-\tilde{\bm u}_h)_T\|_{\Ga}+h^{\frac{1}{2}}\|({\bm u}-{\bm v}_h)_T\|_{\Ga}+\|{\bm u}-{\bm v}_h\| \notag\\
				&+C_{\ka h} h\interleave{\bm u}-{\bm v}_h\interleave  \quad \forall \bm v_h\in\V_h,
			\end{align}
			which together with Lemma~\ref{lem:bd} completes the proof of the lemma.
		\end{proof}
		
		Finally, the proof of Lemma~\ref{lem:Pherror} follows by taking $\bm v_h=\pi_N{\bm u}$ in Lemmas \ref{lem:bd}--\ref{lem:l2err} and using Lemma~\ref{IE}.\hfill \qedsymbol

		\bibliographystyle{abbrv} 
		\bibliography{reference1}

	\end{document}